\theoremstyle{plain}
\newtheorem{theorem}{Theorem}[section]
\newtheorem{cor}[theorem]{Corollary}
\newtheorem{prop}[theorem]{Proposition}
\newtheorem{lemma}[theorem]{Lemma}
\theoremstyle{definition}
\newtheorem{example}[theorem]{Example}
\newtheorem{rem}[theorem]{Remark}
\newtheorem{definition}[theorem]{Definition}
\newcommand{\R}{\mathbb{R}}
\newcommand{\N}{\mathbb{N}}
\newcommand{\eps}{\varepsilon}
\DeclareMathOperator{\diam}{diam}
\DeclareMathOperator{\co}{co}
\DeclareMathOperator{\SA}{SNA}
\DeclareMathOperator{\D}{Der}
\DeclareMathOperator{\AAP}{AAP}
\newcommand{\Lipn}{{\mathrm{Lip}}}
\newcommand{\Lip}{{\mathrm{Lip}}_0}
\title[Approximations of Lipschitz maps with maximal derivatives]
{Approximations of Lipschitz maps with maximal derivatives on Banach spaces}
\author[G.~Choi]{Geunsu Choi}
\address[G.~Choi]{Department of Mathematics Education, Sunchon National University, Jeollanam-do 57922, Republic of Korea}
\email{\texttt{gschoi@scnu.ac.kr}}
\keywords{Banach space, Lipschitz map, Norm attainment, Directional derivative}
\subjclass[2010]{Primary: 46B04;  Secondary: 26A16, 46B20}
\date{\today}                                           
\begin{document}

\begin{abstract}
We study two types of approximations of Lipschitz maps with derivatives of maximal slopes on Banach spaces. First, we characterize the Radon-Nikod\'ym property in terms of strongly norm attaining Lipschitz maps and maximal derivative attaining Lipschitz maps, which complements the characterization presented in \cite{CCM}. It is shown in particular that if every Lipschitz map can be approximated by those that either strongly attain their norm or attain their maximal derivative for every renorming of the range space, then the range space must have the Radon-Nikod\'ym property. Next, we prove that every Lipschitz functional defined on the real line can be locally approximated by maximal affine functions, while such an approximation cannot be guaranteed in the context of uniform approximation. This extends the previous work in \cite{BJLPS} in view of maximal affine functions.
\end{abstract}

\maketitle

\section{Introduction}

One of the most fundamental areas of functional analysis is the study of nonlinear geometric aspects of Banach spaces. In particular, the investigation of Lipschitz function spaces is not only historically deep, but also remains highly active today. It is undeniable that the study of differentiability of Lipschitz functions was a prominent topic in the past decades as evidenced by \cite{Ar,BJLPS,M,P} for instance. On the one hand, the problem of whether bounded linear operators attain their norm or not, a related area of interest, has recently shifted its focus to the case of Lipschitz maps. We refer to \cite{CCGMR,CGMR,CM} for recent studies on approximations of norm attaining Lipschitz maps, and further to \cite{C,G2,KMS} for studies on various Lipschitz norm attainments. In \cite{G2}, Godefroy introduced a concept which merges the notions of norm attainment and differentiability of Lipschitz maps, which we will call maximal derivative attaining Lipschitz maps, and this will serve as the foundation for our primary interest. With the help of this definition, we mainly aim to give new consequences on the approximation of Lipschitz maps with maximal derivatives which improve the results given in \cite{CCM}. Another concept of approximation by differentiable functions which we deal in this paper is the property so-called approximation by affine property introduced in \cite{BJLPS}, where the complete characterization of Lipschitz function spaces for which the property holds was established (see also \cite[Definition 5.19]{BL}). Considering the perspective of maximal derivatives, we will extend their results to approximation by affine functions which has almost the same Lipschitz number as the original Lipschitz map.

Throughout the article, the letters $X$ and $Y$ will always denote real Banach spaces, excluding trivial Banach spaces. In order to ensure clarity, let us briefly review the basic notations which will appear repeatedly. We denote by $S_X$ and $B_X$ the unit sphere and unit ball of $X$, respectively. By the \emph{Lipschitz map} $f \in \Lipn(X,Y)$ we mean a function from $X$ into $Y$ which satisfies that the \emph{Lipschitz number} defined by
$$
\Lipn(f) := \sup_{(p,q) \in \widetilde{X}} \frac{\|f(p)-f(q)\|}{\|p-q\|}
$$
is finite, where $\widetilde{X}$ stands for the set $\widetilde{X} = \{(p,q) \in X^2: p \neq q\}$. Since the Lipschitz number is a seminorm rather than a norm in the space of all Lipschitz maps in general, we sometimes restrict our focus to Lipschitz maps which vanishes at $0 \in M$. In this case, we write the space of Lipschitz maps from $X$ into $Y$ as $\Lip(X,Y)$ equipped with the \emph{Lipschitz norm} $\|\cdot\|$ to avoid the ambiguity. Recently, a particular type of Lipschitz map which has a maximal derivative was introduced in \cite{G2} with a different terminology.

\begin{definition}\cite{G2}\label{definition:Der}
We say that a Lipschitz map $f \in \Lip(X,Y)$ \emph{attains its maximal derivative} if there exist $x \in X$ and  $e \in S_X$ such that
$$
f'(x,e) := \lim_{t \to 0} \frac{f(x+te)-f(x)}{t} \in Y \text{ exists and } \|f'(x,e)\|=\|f\|.
$$
In this case, we write $f \in \D(X,Y)$. Sometimes one might consider the one-sided limit instead of the aforementioned definition, denoted as $\text{Der}^+(X,Y)$. Regardless, in all subsequent results, we can substitute $\text{Der}^+(X,Y)$ for the original definition.
\end{definition}

In fact, the notation $f'(x,e)$ was considered in \cite{P} to represent the directional derivative of a Lipschitz function at the point $x \in X$ in the direction of $e \in S_X$. In the past decade, the norm attainment of Lipschitz maps has been extensively studied as previously referenced to gain a deeper understanding the geometric properties of Lipschitz function spaces. A Lipschitz map $f \in \Lip(X,Y)$ is said to \emph{strongly attain its norm} if there exists $(p,q) \in \widetilde{X}$ such that
$$
\|f\| = \frac{\|f(p)-f(q)\|}{\|p-q\|},
$$
and denoted by $f \in \SA(X,Y)$. A wealth of literature on the strong norm attainment of Lipschitz maps can be found in a recent series of papers as it is the most intuitive definition, at least it seems, in light of the strong parallels between the norm attainment for bounded linear operators. In \cite{CCM}, the relationships among various types of norm attaining Lipschitz maps were classified, including the connection between strongly norm attaining Lipschitz maps and maximal derivative attaining Lipschitz maps. Recall that a Banach space $X$ is said to have the \emph{Radon-Nikod\'ym property} \textup{(RNP} in short\textup{)} \cite{DU} if for every finite measure space $(\Omega,\Sigma,\mu)$ and for every $\mu$-continuous measure $G: \Sigma \to X$ of bounded variation, there exists $g \in L_1(\mu,X)$ such that $G(E) = \int_E g \,d\mu$ for all $E \in \Sigma$. It is worth noting that the \textup{RNP} plays a significant role in characterizing the geometric structure of Banach spaces in terms of norm attaining operators (see \cite{B,CCJM} for instance). Based on Definition \ref{definition:Der}, the following sequence of results presented in \cite{CCM} provides various characterizations, which will be of fundamental importance.

\begin{theorem}\cite{CCM}\label{theorem:CCM}
Let $X$ and $Y$ be Banach spaces.
\begin{enumerate}
\item[\textup{(a)}] If $Y$ has the \textup{RNP}, then $\SA(X,Y) \subseteq \D(X,Y)$.
\item[\textup{(b)}] $Y$ has the \textup{RNP} if and only if $\D(\R,Y)$ is dense in $\Lip(\R,Y)$.
\item[\textup{(c)}] If $\D(X,Y)$ is dense in $\Lip(X,Y)$ for some Banach space $X$, then $Y$ has the \textup{RNP}.
\end{enumerate}
\end{theorem}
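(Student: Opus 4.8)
The backbone of the whole statement is the classical characterization of the RNP by differentiability: $Y$ has the RNP if and only if every Lipschitz curve $\gamma\colon\R\to Y$ is differentiable almost everywhere, in which case the fundamental theorem of calculus $\gamma(b)-\gamma(a)=\int_a^b\gamma'(t)\,dt$ holds. I would record this first and lean on it throughout. For (a), take $f\in\SA(X,Y)$ with $\|f\|=1$ attaining at $(p,q)$, set $v=q-p$ and $\gamma(t)=f(p+tv)$ for $t\in[0,1]$. Then $\Lipn(\gamma)\le\|v\|$ while $\|\gamma(1)-\gamma(0)\|=\|f(q)-f(p)\|=\|v\|$. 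Since $Y$ has the RNP, $\gamma$ is differentiable a.e. with $\|\gamma'(t)\|\le\|v\|$ and $\gamma(1)-\gamma(0)=\int_0^1\gamma'$. Choosing a norming functional $y^\ast\in S_{Y^\ast}$ for $\gamma(1)-\gamma(0)$ and integrating $y^\ast(\gamma'(t))\le\|\gamma'(t)\|\le\|v\|$, the equality $\int_0^1 y^\ast(\gamma')=\|v\|$ forces $\|\gamma'(t)\|=\|v\|$ for a.e.\ $t$. At any such point $t_0$ of differentiability, $\gamma'(t_0)$ is the directional derivative of $f$ at $p+t_0v$ in direction $v$, so with $e=v/\|v\|$ we get $\|v\|\,\|f'(p+t_0v,e)\|=\|v\|$, i.e.\ $\|f'(p+t_0v,e)\|=1=\|f\|$; hence $f\in\D(X,Y)$.

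For the forward implication of (b), fix $f\in\Lip(\R,Y)$ with $\|f\|=1$ and $\eps>0$. By the differentiability characterization the essential supremum of $\|f'\|$ equals $\Lipn(f)=1$, so I can pick $t_0$ where $f'(t_0)$ exists and $\|f'(t_0)\|>1-\eps$; write $u=f'(t_0)/\|f'(t_0)\|\in S_Y$. The plan is to perturb $f$ by a small term that both raises the slope at $t_0$ to the value $\|f\|$ and forces the Lipschitz norm of the perturbed map to be realized precisely as a derivative at $t_0$; concretely I would add a compactly supported, small-Lipschitz-constant bump $t\mapsto c\,\rho(t)\,u$ localized near $t_0$ with $\rho'(t_0)=1$ and then renormalize. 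The delicate point, indeed the \emph{main obstacle} in this direction, is to arrange the bump so that $\|g\|=\|g'(t_0,1)\|$ rather than having the Lipschitz constant spread over other pairs of points. An alternative, cleaner route is to show that $\SA(\R,Y)$ is already dense under the RNP, via the identification $\Lip(\R,Y)=\Lin(\free(\R),Y)$ with $\free(\R)=L_1$ together with Uhl-type density of norm-attaining operators from $L_1$ into RNP spaces (the gap being the upgrade from attainment at a general norm-one element to attainment at a molecule), and then to invoke part (a) to land inside $\D(\R,Y)$.

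Both the converse of (b) and statement (c) are the substantial part, and I would treat them together through the contrapositive using non-dentability: if $Y$ fails the RNP there is a bounded, closed, convex, non-dentable set $K\subseteq B_Y$ and $\eps_0>0$ such that every slice of $K$ has diameter $>\eps_0$. Using $K$ I would build on $\R$ a Lipschitz map $\phi$ whose difference quotients over arbitrarily small intervals stay $\eps_0$-spread (a zig-zag driven by a bush inside $K$), so that no point admits difference quotients converging to a single vector of norm $\|\phi\|$; the same spreading must persist after any perturbation of Lipschitz norm smaller than a fixed multiple of $\eps_0$, whence no $g\in\D(\R,Y)$ lies in that neighborhood of $\phi$, giving the converse of (b). For (c), given density of $\D(X,Y)$ for some $X$, I lift the obstruction by choosing $e^\ast\in S_{X^\ast}$ and setting $f=\phi\circ e^\ast\in\Lip(X,Y)$ with $\|f\|=\|\phi\|$. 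If some $g\in\D(X,Y)$ attaining at $(x_0,u_0)$ were close to $f$, restricting to the attaining line $s\mapsto x_0+su_0$ produces $\gamma(s)=g(x_0+su_0)\in\D(\R,Y)$ with $\|\gamma\|=\|g\|$, while the restriction of $f$ is the reparametrization $s\mapsto\phi(e^\ast(x_0)+s\,e^\ast(u_0))$; comparing Lipschitz constants forces $|e^\ast(u_0)|$ close to $1$, so $\gamma$ falls inside the forbidden neighborhood of an essentially translated copy of $\phi$, a contradiction. The main obstacle throughout is the quantitative, perturbation-stable construction of $\phi$ from the non-dentable set: the $\eps_0$-spread of difference quotients must survive both small Lipschitz perturbations and the affine reparametrization coming from an arbitrary domain $X$.
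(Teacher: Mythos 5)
Since the paper only quotes this theorem from \cite{CCM} and never reproves it, your proposal has to be measured against the argument in that reference. Your part (a) is correct and complete, and it is essentially the standard proof: restrict $f$ to the attaining segment, use the RNP to differentiate the curve almost everywhere, and combine the fundamental theorem of calculus with a norming functional to force $\|\gamma'(t)\|=\|v\|$ a.e., which produces a maximal directional derivative. Your treatment of the converse of (b) and of (c) is also the right strategy: a bush/martingale built inside a non-dentable subset of $B_Y$, whose primitive $\phi$ has $\eps_0$-spread difference quotients at every point and at arbitrarily small scales, the spread being stable both under Lipschitz-small perturbations and under composition with a functional $e^*\in S_{X^*}$ for general $X$. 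This is exactly the machinery the present paper itself develops (Lemma \ref{lemma:RNP-martingale} and the proofs of Theorems \ref{theorem:RNP-Der} and \ref{theorem:SNA-Der-dense}); your version is only a sketch, but the missing details (nested atomic intervals around a putative differentiability point, the case distinction when that point is an endpoint of the partitions) are routine and are carried out there.

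The genuine gap is the forward direction of (b), which is not less substantial than the converse. Your bump construction cannot be completed: with $g=(1-c)f+c\,\rho(\cdot)u$, $\rho$ being $1$-Lipschitz with $\rho'(t_0)=1$, one gets $\|g'(t_0)\|=(1-c)\|f'(t_0)\|+c$ while $\|g\|$ may be as large as $1$, so the defect $\|g\|-\|g'(t_0)\|$ can be of order $\eps(1-c)$ and vanishes only when $c=1$, i.e.\ when $g$ is no longer close to $f$; no localized bump helps, because in the Lipschitz norm (equivalently the $L_\infty$ norm of derivatives) a perturbation supported on a tiny interval is not small. Your fallback route is actually false: $\SA(\R,Y)$ need not be dense in $\Lip(\R,Y)$ when $Y$ has the RNP --- already $\SA(\R,\R)$ is not dense, since the primitive of the characteristic function of a fat Cantor set cannot be approximated by strongly norm attaining functionals \cite{KMS}; this is precisely the obstruction the present paper exploits via Lemma \ref{lemma:SNA-line} inside the proof of Theorem \ref{theorem:SNA-Der-dense}. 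The correct idea is dual to yours: instead of raising the slope at $t_0$, lower it everywhere else. Using the RNP, write $f(t)=\int_0^t f'(s)\,ds$ with $f'\in L_\infty(\R,Y)$, pick a Lebesgue point $t_0$ of $f'$ with $r:=\|f'(t_0)\|>\|f\|-\eps$, and let $g$ be the primitive of $T_r\circ f'$, where $T_r$ denotes the radial retraction of $Y$ onto $rB_Y$. Then $\|g-f\|\le\|f\|-r<\eps$; since $T_r$ is $2$-Lipschitz, $t_0$ remains a Lebesgue point of $T_r\circ f'$, so $g'(t_0)=f'(t_0)$ exists and $\|g'(t_0)\|=r=\|g\|$, i.e.\ $g\in\D(\R,Y)$. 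This truncation step is the missing idea in your proposal.
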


Our main objective in Section \ref{section:Der} is to investigate the converse and generalizations of Theorem \ref{theorem:CCM}. The study in this direction will commence with Theorem \ref{theorem:RNP-Der}, which establishes that the converse of Theorem \ref{theorem:CCM} (a) holds only if $Y$ has the extremal \textup{RNP} (see Definition \ref{definition:extremal-RNP}). Then by observing that the sets $\SA(X,Y)$ and $\D(X,Y)$ are different in many cases, we extend Theorems \ref{theorem:CCM}.(b) and \ref{theorem:CCM}.(c) in terms of the union of those two sets.

On the one hand in \cite{BL}, the existence of certain Lipschitz function spaces where every Lipschitz map can be approximated by affine functions defined on a smaller domain is explored. Below we present the precise definition of the concept we will extend. Recall that the diameter of a bounded set $D$ is denoted by $\diam(D)$.

\begin{definition}\cite{BL}
A pair $(X,Y)$ of Banach spaces is said to have the \emph{approximation by affine property} \textup{(}$\AAP$ in short\textup{)} if for every $\eps>0$, a Lipschitz map $f \in \Lipn(X,Y)$ and a ball $B \subseteq X$, there exists another ball $B_1 \subseteq B$ and an affine function $g: B_1 \to Y$ such that
$$
\sup_{x \in B_1} \|g(x)-f(x)\| \leq \eps \diam(B_1) \Lipn(f).
$$
If we can find a function $c(\eps)>0$ such that $B_1$ can be chosen $\diam(B_1) \geq c\diam(B)$, then we say $(X,Y)$ has the \emph{uniform approximation by affine property} \textup{(}\emph{uniform} $\AAP$ in short\textup{)}.
\end{definition}

The uniform $\AAP$ was fully characterized in \cite{BJLPS} by showing that the pair $(X,Y)$ has the uniform $\AAP$ if and only if either $X$ or $Y$ is super-reflexive and the other one is finite-dimensional. However, in our context, since the \textup{AAP} does not ensure that the resulting function has a slope sufficiently close to that of the approximating function, a property with a stronger assumption in terms of maximal derivatives will be introduced in Section \ref{section:maximal-AAP}. As main results, we will see that the maximal $\AAP$ is obtained for the pair $(\R,\R)$, while the same pair $(\R,\R)$ fails to satisfy the maximal uniform $\AAP$, which highlights a significant difference from the uniform $\AAP$.

Let us finish the section with introducing the notion of a fat Cantor set, as many of our counterexamples are constructed based on this notion. Recall that the \emph{Cantor set} is the subset of $[0,1]$ constructed by iteratively removing the middle third of each remaining interval. More precisely, we construct a decreasing sequence of subsets $(C_n)$ of $[0,1]$ as follows:
$$
C_1 := [0,1/3] \cup [2/3,1] \subseteq [0,1],
$$
$$
C_2 := [0,1/9] \cup [2/9,1/3] \cup [2/3,7/9] \cup [8/9,1] \subseteq [0,1],
$$
$$
\vdots
$$
and define the Cantor set $C$ by $C = \bigcap_{n=1}^\infty C_n$. It is straightforward to verify that $C$ is a nonempty nowhere dense compact subset of $[0,1]$ with $\lambda(C)=0$, where $\lambda$ denotes the Lebesgue measure on $\R$. A \emph{fat Cantor set} is defined analogously; we remove the middle third of each interval on $[0,1]$, but with a decreasing proportion of cutouts at each step so that the obtained set satisfies $\lambda(C)>0$. Still, the resulting $C$ remains a nonempty nowhere dense compact subset of $[0,1]$, which will be essential for our constructions.

\ \

\section{On the maximal derivative attaining Lipschitz maps}\label{section:Der}

We begin by introducing the main definition, which serve as a necessary condition for our results as noted in the introduction. Explicit constructions and examples will be provided subsequently. Recall that a bounded subset $D \subseteq X$ is said to be \emph{dentable} if for every $\eps>0$, there exists $x \in D$ such that $x \notin \overline{\co}(D \setminus B(x,\eps))$ where $B(x,\eps)$ is the open ball centered at $x$ with a radius $\eps$. It is well known that $X$ has the \textup{RNP} if and only if every bounded closed convex subset of $X$ is dentable (see \cite[p.218]{DU} for instance). Recall that for a bounded convex set $D \subseteq X$, a point $z \in D$ is called a \emph{farthest point} if there exists $x \in X$ such that
$$
\|z-x\| = \sup_{y \in D} \|y-x\|.
$$
The study of farthest points of convex sets has been extensively investigated, as exemplified by \cite{As,BD,L,Z}.

\begin{definition}\label{definition:extremal-RNP}
A Banach space $X$ is said to have the \emph{extremal} \textup{RNP} if every bounded closed convex subset of $X$ which contains a farthest point is dentable.
\end{definition}

\begin{rem}
It is shown in \cite{As} that if $X$ is reflexive and locally uniformly convex, then every bounded closed convex subset $D$ of $X$ contains a farthest point. On the contrary, it is well known that there exists a Banach space which contains a bounded closed convex subset with no farthest points (see \cite{BD} for instance).
\end{rem}

It is clear that the \textup{RNP} implies the extremal \textup{RNP}, but the converse is not known. Still, we will see that many classical Banach spaces without the \textup{RNP} fail to have the extremal \textup{RNP} by considering their unit balls.

\begin{example}\label{example:extremal-RNP}
$c_0$, $L_1[0,1]$, $L_\infty[0,1]$ and $C[0,1]$ fail the extremal \textup{RNP}.
\end{example}

\begin{proof}
For $c_0$, $L_1[0,1]$ and $C[0,1]$, see \cite{R}. For $L_\infty[0,1]$, see \cite[Example V.3.5]{DU}.
\end{proof}

On the one side, the next result shows in particular that $\ell_\infty$ also fails the extremal \textup{RNP}.

\begin{prop}
Let $X$ be a separable Banach space such that $X^*$ is non-separable. Then, $X^*$ fails the extremal \textup{RNP}.
\end{prop}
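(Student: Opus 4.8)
The plan is to produce a bounded closed convex set $D \subseteq X^*$ that is \emph{not} dentable yet possesses a farthest point; by Definition \ref{definition:extremal-RNP} such a $D$ witnesses the failure of the extremal \textup{RNP}. The first observation is that the farthest point essentially comes for free once $D$ has the right shape: if $D \subseteq B_{X^*}$ contains an antipodal pair $f,-f$ with $\|f\|=1$, then for $x=-f$ we have $\sup_{y\in D}\|y-x\| \le \sup_{y\in B_{X^*}}\|y+f\| \le 2 = \|f-(-f)\|$, so the supremum is attained at $f$ and $f$ is a farthest point of $D$. Thus the entire difficulty is concentrated in building a non-dentable set of this shape; in particular one cannot simply take $D=B_{X^*}$, because the dual ball may well be dentable. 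For instance $B_{(\R\oplus_2\ell_1)^*}$ has $(1,0)$ as a denting point, even though $\R\oplus_2\ell_1$ is separable with non-separable dual, so the non-dentability must be located inside a carefully chosen subset rather than in the whole ball.

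The source of non-dentability is the failure of the \textup{RNP}, and this is exactly where separability of $X$ enters decisively. For $X$ separable one has that $X^*$ is separable if and only if $X^*$ has the \textup{RNP} (a separable dual has the \textup{RNP}, and conversely the \textup{RNP} of $X^*$ makes $X$ an Asplund space, forcing $X^*$ separable). Since $X^*$ is assumed non-separable, $X^*$ fails the \textup{RNP}, and hence contains a bounded closed convex non-dentable set; concretely, a bounded $\delta$-separated bush for some $\delta>0$ whose closed convex hull $B$ is non-dentable. Separability of $X$ also makes $(B_{X^*},w^*)$ compact and metrizable, which will let the bush be extracted from a $\delta$-separated, weak-$*$ convergent family of norm-one functionals.

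I would then assemble $D$ by symmetrizing. Starting from a $\delta$-separated family $S$ in the sphere $S_{X^*}$ that carries the bush structure, set $D=\overline{\co}\,(S\cup(-S))$, a symmetric bounded closed convex subset of $B_{X^*}$. Symmetrization preserves non-dentability: every slice of $D$ is supported near one of the two copies $S$ or $-S$ and therefore inherits a large diameter from the bush. At the same time a norm-one generator $f\in S\subseteq D$, together with $-f\in D$, supplies by the first paragraph a farthest point. Then $D$ is bounded, closed, convex, non-dentable and has a farthest point, so $X^*$ fails the extremal \textup{RNP}.

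The main obstacle is the simultaneous control of two competing requirements inside $B_{X^*}$: the set must have \emph{every} slice large (non-dentability), yet it must also \emph{attain} its extremal distance so as to possess a farthest point. These pull in opposite directions, because the norm on $X^*$ is only weak-$*$ lower semicontinuous, so suprema and diameters over weak-$*$ compacta need not be attained (a weak-$*$ limit of the bush typically loses norm and so fails to be norm-one), and because mere $\delta$-separation of a family is far too weak to force non-dentability of its convex hull (the unit vector basis of $\ell_1$ is separated but spans a space with the \textup{RNP}). Reconciling the two, that is, arranging the bush so that it genuinely kills every slice while still being generated by norm-one vectors that realize the diameter $2$ of $D$ without becoming denting points, is the delicate technical heart of the argument; the weak-$*$ compactness and metrizability afforded by the separability of $X$ is precisely the tool that makes this possible.
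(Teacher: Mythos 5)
Your high-level strategy coincides with the paper's: exhibit a bounded closed convex non-dentable subset of $B_{X^*}$ containing a norm-one functional, and get the farthest point essentially for free. Two of your ingredients are sound: the Asplund-duality argument that $X^*$ fails the \textup{RNP} (for separable $X$, the dual has the \textup{RNP} if and only if it is separable), and the claim that symmetrization preserves non-dentability (every slice of $\overline{\co}\,(S\cup(-S))$ contains a slice of $\overline{\co}\,(S)$ or of $-\overline{\co}\,(S)$, hence has large diameter). In fact you work harder than necessary on the farthest-point side: if $D\subseteq B_{X^*}$ and $z\in D$ has $\|z\|=1$, then $z$ is already a farthest point with respect to the origin, since $\sup_{y\in D}\|y\|\le 1=\|z\|$; this is the observation the paper uses (it is also how Lemma \ref{lemma:RNP-martingale} is set up), so the antipodal pair and the symmetrization $D=\overline{\co}\,(S\cup(-S))$ can be dropped entirely.

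The genuine gap is exactly the step you defer to ``the delicate technical heart'': you never construct a non-dentable set that actually contains a norm-one functional. Failure of the \textup{RNP} only gives a bounded closed convex non-dentable set somewhere in $X^*$; after translating and scaling it into $B_{X^*}$, nothing forces it to meet the unit sphere, and if it stays inside the open ball your farthest-point argument has nothing to act on. This is not a removable technicality: as the paper recalls (citing \cite{BD}), there exist bounded closed convex sets with no farthest points whatsoever, so non-dentability alone can never finish the proof; and an abstract appeal to weak-$*$ compactness and metrizability cannot supply the missing norm-one elements, because the norm is only weak-$*$ lower semicontinuous --- weak-$*$ limits can only lose norm, which is precisely the obstruction you name but do not overcome. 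The paper closes this gap concretely: it invokes the bush construction of \cite[Theorem 5.23]{BL} for a separable space with non-separable dual and observes, by inspecting that proof, that the construction can be run with initial set $G_0=B_{X^*}$ and initial bush point $x_0^*$ of norm exactly $1$, so that the resulting non-dentable set sits inside $B_{X^*}$ and contains a norm-one point by construction. Your proposal needs this inspection (or an equivalent explicit construction); as written, ``a $\delta$-separated family $S$ in the sphere $S_{X^*}$ that carries the bush structure'' is an assumption, not something you have produced.
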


\begin{proof}
By \cite[Theorem 5.23]{BL}, $B_{X^*}$ contains an $\eps$-bush for some $\eps>0$. An inspection of the proof shows that we can choose $G_0 := B_{X^*}$ and $x_0^* \in G_0$ with $\|x_0^*\|=1$.
\end{proof}

This gives an affirmative answer to the above question for duals of separable spaces.

\begin{cor}\label{cor:sep-dual}
Let $X$ be a separable Banach space. Then, $X^*$ has the \textup{RNP} if and only if $X^*$ has the extremal \textup{RNP}.
\end{cor}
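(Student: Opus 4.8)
The plan is to prove Corollary \ref{cor:sep-dual} by combining the preceding Proposition with the classical fact that separability of a dual space characterizes the RNP in this setting. Let me sketch the key steps.

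First, the forward direction: if $X^*$ has the RNP, then $X^*$ has the extremal RNP. This follows immediately from the remark preceding Example \ref{example:extremal-RNP}, namely that the RNP implies the extremal RNP (since every bounded closed convex set is dentable when $X^*$ has the RNP, in particular those containing a farthest point). This direction requires no separability hypothesis and no additional work.

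For the nontrivial direction, I would argue by contraposition: assume $X^*$ fails the RNP and show it fails the extremal RNP. Here is where separability of $X$ becomes essential. The classical theorem of Stegall (see \cite[Theorem VII.6 and the surrounding discussion]{DU}) states that for $X$ separable, $X^*$ has the RNP if and only if $X^*$ is separable. Thus if $X^*$ fails the RNP, then $X^*$ must be non-separable. At this point the hypotheses of the preceding Proposition are met: $X$ is separable and $X^*$ is non-separable, so the Proposition yields directly that $X^*$ fails the extremal RNP.

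Combining the two directions gives the equivalence. The main obstacle, such as it is, lies entirely outside this proof: it is the content of the Proposition itself, which relies on extracting an $\eps$-bush in $B_{X^*}$ with a distinguished starting point $x_0^*$ of norm one, thereby exhibiting a bounded closed convex set containing a farthest point that is not dentable. Within the Corollary itself, the only conceptual point to get right is invoking the correct characterization of the RNP for separable duals; everything else is a formal chaining of implications. I would therefore keep the proof short, stating that the forward implication is trivial from the RNP-implies-extremal-RNP observation, and that the converse follows from the Proposition once Stegall's theorem guarantees non-separability of $X^*$ whenever the RNP fails.
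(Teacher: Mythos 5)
Your proposal is correct and is exactly the argument the paper intends (the paper leaves the corollary's proof implicit, deriving it directly from the preceding Proposition): the forward direction is the trivial observation that the RNP implies the extremal RNP, and the converse uses Stegall's characterization that for separable $X$ the dual $X^*$ has the RNP if and only if $X^*$ is separable, so failure of the RNP forces non-separability of $X^*$ and the Proposition applies. No gaps; this matches the paper's route.
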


For the next remark, we note that it is not known whether the extremal \textup{RNP} is an isomorphic property or not. Nevertheless, for a Banach space which fails the \textup{RNP}, we can renorm the space to fail the extremal \textup{RNP}.

\begin{prop}\label{prop:renorm}
Let $X$ be a Banach space which fails the \textup{RNP}. Then, $X$ can be renormed to fail the extremal \textup{RNP}.
\end{prop}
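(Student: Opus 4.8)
The plan is to reduce the statement to producing, after an equivalent renorming, a single bounded closed convex non-dentable set that contains a farthest point; such a set by itself witnesses the failure of the extremal \textup{RNP}. Since $X$ fails the \textup{RNP}, the characterization recalled above (every bounded closed convex set is dentable if and only if $X$ has the \textup{RNP}) furnishes a bounded closed convex subset $D\subseteq X$ that is not dentable. The first point I would record is that (non-)dentability of a \emph{fixed} set is invariant under passing to an equivalent norm: if $c\|\cdot\|\le|||\cdot|||\le C\|\cdot\|$, then $|||y-x|||<c\eps$ gives $\|y-x\|<\eps$, so $B_{|||\cdot|||}(x,c\eps)\subseteq B(x,\eps)$ and hence $\overline{\co}(D\setminus B(x,\eps))\subseteq\overline{\co}(D\setminus B_{|||\cdot|||}(x,c\eps))$; thus dentability for $|||\cdot|||$ entails dentability for $\|\cdot\|$, and the direction we need, that non-dentability for $\|\cdot\|$ persists for $|||\cdot|||$, is the contrapositive. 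Consequently $D$ stays non-dentable in every equivalent norm, and the whole problem becomes: find an equivalent norm $|||\cdot|||$ on $X$ in which $D$ has a point of maximal distance from some external point, i.e. a farthest point.

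To build such a norm I would use a supporting functional of $D$. By the Bishop--Phelps theorem the support functionals of the bounded closed convex set $D$ are dense in $X^*$, so I may fix $f\in S_{X^*}$ whose supremum $s:=\sup_{y\in D}f(y)$ is attained, say at $z_0\in D$, and choose $u\in X$ with $f(u)=1$. I then define $|||x|||:=\max\{\eps\|x\|,\,|f(x)|\}$ for a small parameter $0<\eps<1/\|u\|$; since $|f(x)|\le\|x\|$, this is a genuine norm with $\eps\|x\|\le|||x|||\le\|x\|$, hence equivalent to the original one. The point of this choice is that the new norm is governed by the linear functional $f$ in directions where $f$ is large, which lets $f$ dictate which point of $D$ is farthest.

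Finally I would take the external centre $x_*:=z_0-Tu$ with $T$ large and compute the farthest point. Writing $\Delta:=\sup_{y\in D}(s-f(y))<\infty$, one has $f(y-x_*)=T-(s-f(y))\in[T-\Delta,T]$, while $\eps\|y-x_*\|\le\eps(\diam(D)+T\|u\|)$; since $\eps\|u\|<1$, taking $T$ large enough (uniformly over the bounded set $D$) forces $|||y-x_*|||=|f(y-x_*)|=T-(s-f(y))$ for every $y\in D$. Therefore $\sup_{y\in D}|||y-x_*|||=T$, attained precisely where $f(y)=s$, in particular at $z_0\in D$, so $z_0$ is a farthest point of $D$ in the norm $|||\cdot|||$. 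As $D$ is still bounded, closed, convex and non-dentable for $|||\cdot|||$, the space $(X,|||\cdot|||)$ fails the extremal \textup{RNP}. The main obstacle to watch is the renorming invariance of non-dentability in the first step, together with making the domination in the last step uniform over all of $D$ rather than merely pointwise; the remaining verifications (that $|||\cdot|||$ is an equivalent norm and that $\sup_{y\in D}f(y)$ is attained) are routine given Bishop--Phelps.
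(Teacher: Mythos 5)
Your proof is correct, but it follows a genuinely different route from the paper's. The paper's proof is essentially two lines: it invokes the Davis--Phelps renorming theorem (\cite{DP}, Proposition 1), which produces an equivalent norm on $X$ whose unit ball is non-dentable, and then observes that unit balls always contain farthest points (any norm-one point is farthest from the origin), so the renormed unit ball itself witnesses the failure of the extremal \textup{RNP}. You instead keep the original non-dentable set $D$ fixed and build an explicit equivalent norm $|||x||| = \max\{\eps\|x\|, |f(x)|\}$, where $f$ is a support functional of $D$ supplied by the Bishop--Phelps theorem, and then verify that translating far in the direction $-u$ (with $f(u)=1$) makes the supporting point $z_0$ a farthest point of $D$; combined with your correct preliminary observation that non-dentability of a fixed set is invariant under equivalent renormings, this gives the claim. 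The trade-off is clear: the paper's argument is shorter but outsources all the work to the Davis--Phelps renorming; yours avoids Davis--Phelps entirely, is self-contained modulo Bishop--Phelps, and makes explicit the renorming-invariance of non-dentability, which the paper's approach uses only implicitly. A minor additional payoff of your construction is that it shows the renorming can be chosen to endow any prescribed non-dentable set with a farthest point, rather than replacing it by the new unit ball.
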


\begin{proof}
Let $D$ be a bounded closed convex subset of $X$ which is not dentable. Consider a Banach space isomorphic to $X$ whose unit ball is not dentable (see \cite[Proposition 1]{DP}), and the rest follows as unit balls always contain farthest points.
\end{proof}

In order to discuss the results on characterizations of maximal derivative attaining Lipschitz maps, we introduce the following essential lemma of constructing a very special function when the range space fails the extremal \textup{RNP}. Although the main techinques are briefly outlined in the references cited in the proof, we provide more detailed explanations in our argument, as they will serve as the cornerstone for constructing counterexamples.

\begin{lemma}\label{lemma:RNP-martingale}
Let $Y$ be a Banach space which fails the extremal \textup{RNP}. Then, there exists $\eps>0$ and a finite $B_Y$-valued martingale $(f_n,B_n)$ in $L_1([0,1],Y)$ where $B_n$ is a subfield consisting of intervals in $[0,1]$ such that $f_1$ is a constant function in $S_{L_1([0,1],Y)}$ and $\|f_n(t)-f_{n+1}(t)\| \geq \eps$ for each $n \in \N$ and $t \in [0,1]$. Moreover, the set of endpoints of intervals in $\bigcup B_n$ forms a dense subset in $[0,1]$.
\end{lemma}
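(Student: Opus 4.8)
The plan is to convert the failure of the extremal \textup{RNP} into the existence of an infinite, $\eps$-separated bush rooted at a norm-one vector, and then simply read off the martingale from the tree structure of that bush. First I would use the hypothesis to fix a bounded closed convex set $D\subseteq Y$ that contains a farthest point but is \emph{not} dentable. If $z\in D$ is a farthest point, say $\|z-x_0\|=\sup_{y\in D}\|y-x_0\|=:R$ for some $x_0\in Y$, I would replace $D$ by $(D-x_0)/R$; this affine change preserves both non-dentability and the farthest-point property, and after it we may assume $D\subseteq B_Y$ while the image of $z$ satisfies $\|z\|=1$, i.e. $z\in S_Y\cap D$. (It is exactly the farthest-point hypothesis that makes the supremum $R$ attained at a point of $D$, so that the rescaled $z$ genuinely lies on $S_Y$ rather than merely in the limit.) Non-dentability then furnishes a constant $\eps>0$ with $x\in\overline{\co}(D\setminus B(x,\eps))$ for every $x\in D$.

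Next I would construct an infinite bush in $D$ whose root is exactly $z$: a family $(x_s)_{s\in T}$ indexed by a finitely branching tree $T$, with root value $z$, such that every node is an \emph{exact} convex combination $x_s=\sum_j\lambda_{s,j}\,x_{s_j}$ of its successors, with $x_s\in D$ and $\|x_{s_j}-x_s\|\ge\eps$ for each successor $s_j$. The existence of such an $\eps$-bush for a non-dentable bounded closed convex set is classical (see \cite[Theorem 5.23]{BL} and \cite[p.218]{DU}); inspecting that construction shows, exactly as in the proof of the preceding proposition, that the root may be prescribed to be any given point of $D$, so I take it to be $z$. Because the defining inequality $x\in\overline{\co}(D\setminus B(x,\eps))$ holds at \emph{every} point of $D$, the construction never terminates, and the bush may be taken infinite along every branch.

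With the bush in hand I would build the martingale by coding the tree onto $[0,1]$. Assign to the root the whole interval $[0,1]$, and inductively split the interval $I_s$ attached to a node $s$ into consecutive subintervals $I_{s_j}$ with $\lambda(I_{s_j})=\lambda_{s,j}\,\lambda(I_s)$; let $B_n$ be the finite field generated by the level-$n$ intervals, and set $f_n:=x_s$ on $I_s$ for each level-$n$ node $s$. Since the subdivisions nest, $(B_n)$ is a filtration of finite fields of intervals and each $f_n$ is $B_n$-measurable; the exactness of the convex representations is precisely the identity $\mathbb{E}[f_{n+1}\mid B_n]=f_n$; the inclusion $D\subseteq B_Y$ makes $(f_n,B_n)$ a $B_Y$-valued finite martingale; and the separation $\|x_{s_j}-x_s\|\ge\eps$ gives $\|f_n(t)-f_{n+1}(t)\|\ge\eps$ for all $n$ and $t$. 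As $f_1\equiv z$ with $\|z\|=1$, we get $\|f_1\|_{L_1([0,1],Y)}=1$, so $f_1\in S_{L_1([0,1],Y)}$.

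For the density of the endpoints I would exploit the separation condition itself. Writing $x_s=\lambda_{s,j}x_{s_j}+(1-\lambda_{s,j})w$ with $w\in\overline{\co}(D)\subseteq B_Y$ gives $\eps\le\|x_s-x_{s_j}\|=(1-\lambda_{s,j})\|w-x_{s_j}\|\le 2(1-\lambda_{s,j})$, whence $\lambda_{s,j}\le 1-\eps/2$ for every successor. Consequently each level-$n$ interval has length at most $(1-\eps/2)^{n}$, so the mesh of $B_n$ tends to $0$ and the endpoints of the intervals in $\bigcup_n B_n$ are dense in $[0,1]$, as required. The one genuinely delicate point is the \emph{exactness} of the convex combinations: non-dentability only places $x$ in the \emph{closed} convex hull of the far points, so producing honest equalities rather than approximations is the heart of the bush-existence theorem I invoke; once an exact $\eps$-bush rooted at $z$ is available, every remaining property is bookkeeping.
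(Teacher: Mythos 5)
Your proposal is correct and follows essentially the same route as the paper: normalize the non-dentable set via its farthest point so that $D \subseteq B_Y$ and $z \in S_Y$, invoke the classical non-dentability--to--bush/martingale machinery of Benyamini--Lindenstrauss and Diestel--Uhl with the construction rooted at $z$, and deduce density of the endpoints from boundedness plus $\eps$-separation. The only difference is one of detail: the paper cites \cite[Theorems 5.8 and 5.21]{BL} for the martingale construction and the endpoint density, whereas you carry out the bush-to-interval coding and the mesh estimate $\lambda_{s,j} \leq 1 - \eps/2$ explicitly, while both you and the paper delegate the genuinely delicate step (exact rather than approximate convex combinations) to the same cited literature.
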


\begin{proof}
Under suitable contraction and translation, let $D$ be a bounded closed convex non-dentable subset of $B_Y$ which contains a farthest point $z \in S_Y$ so that
$$
\|z\| = \sup_{y \in D} \|y\|.
$$
For the construction of $\eps>0$ and $f_n$, see \cite[Theorem 5.21]{BL} and the proof of \cite[Theorem 5.8]{BL}. We have that all the values of $f_n$ lie in $D$, and so it is only relevant to choose $f_1$ in $S_{L_1([0,1],Y)}$, which can be achieved by defining $f_1 \equiv z$. Since each $f_n$ is $B_Y$-valued, the set of endpoints of intervals in $\bigcup B_n$ is dense in $[0,1]$ (see the proof of \cite[Theorem 5.21]{BL} and also Chapter V.3 in \cite{DU} for instance).
\end{proof}

Following the approach outlined in Lemma \ref{lemma:RNP-martingale}, we are able to establish the reverse inclusion in terms of distinguished sets of norm attaining Lipschitz maps, extending \cite[Proposition 2.1]{CCM}.

\begin{theorem}\label{theorem:RNP-Der}
Let $X$ and $Y$ be Banach spaces. If $\SA(X,Y) \subseteq \D(X,Y)$, then $Y$ has the extremal \textup{RNP}.
\end{theorem}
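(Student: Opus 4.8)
The plan is to prove the contrapositive: assuming $Y$ fails the extremal \textup{RNP}, I will construct a single map witnessing $\SA(X,Y)\not\subseteq\D(X,Y)$. The engine is Lemma \ref{lemma:RNP-martingale}, which supplies $\eps>0$ and a $B_Y$-valued martingale $(f_n,B_n)$ with $f_1\equiv z$ for a farthest point $z\in S_Y$ of the relevant set $D$, with $\|f_n(t)-f_{n+1}(t)\|\ge\eps$ for all $n,t$, and with the endpoints of the intervals in $\bigcup_n B_n$ dense in $[0,1]$. First I would integrate the martingale: set $g_n(s):=\int_0^s f_n$, so that each $g_n$ is $1$-Lipschitz. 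Using the identity $\int_0^s f_m=\int_0^s f_n$ for $m\ge n$ whenever $s$ is an endpoint of $B_n$, the sequence $(g_n)$ converges at every endpoint; equicontinuity together with the density of the endpoints then upgrades this to uniform convergence to a $1$-Lipschitz limit $g\colon[0,1]\to Y$.

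The strong norm attainment comes from the farthest point. Since the martingale mean is constant, $g(1)-g(0)=\int_0^1 f_n=\int_0^1 f_1=z$, whence $\|g(1)-g(0)\|=\|z\|=1$; as $g$ is $1$-Lipschitz this forces $\Lipn(g)=1$, attained strongly at $(1,0)$. Extending $g$ to the whole line by $\tilde f(r):=g(\max\{0,\min\{1,r\}\})-g(0)$ keeps it $1$-Lipschitz with $\tilde f(0)=0$ and preserves the attaining pair. To pass to an arbitrary $X$, fix $u\in S_X$ and a Hahn--Banach functional $x^*\in S_{X^*}$ with $x^*(u)=1$, and set $F:=\tilde f\circ x^*$. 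Then $F\in\Lip(X,Y)$ has $\Lipn(F)\le 1$, and $\|F(u)-F(0)\|=\|\tilde f(1)-\tilde f(0)\|=1=\|u\|$, so $F$ strongly attains its norm at $(u,0)$ and $\|F\|=1$.

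The heart of the argument---and the step I expect to be the main obstacle---is to show that $g$ attains no directional derivative of norm $1$; in fact I would show $g$ is nowhere two-sided differentiable on $(0,1)$. Fix $s\in(0,1)$ and let $[a_n,b_n]$ be the atom of $B_n$ containing $s$; density of the endpoints forces $b_n-a_n\to 0$. On this atom $f_n$ is constant, say $f_n\equiv v_n\in D$, and the martingale identity gives $g(b_n)-g(a_n)=\int_{a_n}^{b_n}f_n=v_n(b_n-a_n)$, so the difference quotient over $[a_n,b_n]$ equals exactly $v_n$. If $g'(s)$ existed, these quotients would converge to $g'(s)$, forcing $v_n\to g'(s)$ and hence $\|v_n-v_{n+1}\|\to 0$; but $v_n=f_n(s)$ and $v_{n+1}=f_{n+1}(s)$ satisfy $\|v_n-v_{n+1}\|\ge\eps$, a contradiction (the eventual-endpoint points are handled by the same estimate applied to the one-sided quotient over the shrinking right atom).

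Finally, for $F=\tilde f\circ x^*$ the chain rule along the line determined by $x^*$ shows that $F'(x,e)=x^*(e)\,\tilde f'(x^*(x))$ whenever the right-hand side makes sense, so any existing directional derivative of $F$ has norm at most $\|\tilde f'(x^*(x))\|$, which never reaches $1$. Thus $F\in\SA(X,Y)\setminus\D(X,Y)$, proving the contrapositive. The delicate points I would treat with care are the uniform convergence $g_n\to g$ and the passage from the pointwise martingale gap to genuine non-differentiability; here the role of the farthest point is precisely to make the attaining pair $(u,0)$ available, while non-dentability is what feeds the gap $\ge\eps$ that destroys the maximal derivative.
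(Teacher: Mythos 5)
Your proposal is correct and takes essentially the same route as the paper's own proof: integrate the martingale of Lemma \ref{lemma:RNP-martingale} to obtain a $1$-Lipschitz curve $g$ with $g(1)-g(0)=z$ (so $g$ strongly attains its norm), use the $\eps$-gap $\|f_n(s)-f_{n+1}(s)\|\geq\eps$ on the shrinking atoms containing a point $s$ to rule out any directional derivative of norm $\|g\|$, and compose with a norm-attaining functional $x^*\in S_{X^*}$ to transport the example to an arbitrary domain $X$. The only differences are organizational: your single straddle-lemma argument unifies the paper's Case 1 (point interior to nested atoms) and Case 2 (point that is eventually an atom endpoint), and your chain-rule reduction $F'(x,e)=x^*(e)\,\tilde f'(x^*(x))$ plays the role of the paper's observation that a norm-one directional derivative of $\tilde g=g\circ z_0^*$ forces $|z_0^*(e_0)|=1$.
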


\begin{proof}
We will first show that $\SA(\R,Y) \not\subseteq \D(\R,Y)$ if $Y$ fails the extremal \textup{RNP}. Since $Y$ fails the extremal \textup{RNP}, according to Lemma \ref{lemma:RNP-martingale} we may consider a $B_Y$-valued martingale $(f_n,B_n)$ in $L_1([0,1],Y)$ such that $\|f_n(t)-f_{n+1}(t)\| \geq \eps$ for each $n \in \N$ and $t \in [0,1]$. Let us say $f_1 \equiv z$ where $z$ is as in Lemma \ref{lemma:RNP-martingale}. Since $(f_n,B_n)$ is a martingale in $L_1([0,1],Y)$ and the set of endpoints of intervals in $B_n$ is dense in $[0,1]$, we may define the function $g: [0,1] \to Y$ continuously by
\begin{equation}\label{equation:define-g}
g(t) := \lim_n \int_0^t f_n(\tau) \,d\tau.
\end{equation}
Here, we think of $g: [0,1] \to Y$ as a function in $\Lip(\R,Y)$, with its values extended constantly to the endpoints of $[0,1]$. It is clear that $g \in \SA(\R,Y)$ with $\|g\|=1$ since $g(1)=z$ (in fact by Lemma \ref{lemma:SNA-line}, this shows that $g$ strongly attains its norm at $(t_1,t_2)$ with any $t_1<t_2$ in $[0,1]$). Suppose now that $g \in \D(\R,Y)$. Then, there exists $t_0 \in (0,1)$ such that
$$
g'(t_0) = \lim_{h \to 0} \frac{g(t_0+h)-g(t_0)}{h} \text{ exists and } \|g'(t_0)\|=1.
$$
Fix $\delta>0$ so that
\begin{equation}\label{equation:if-Der}
\left\| \frac{g(t_0+h)-g(t_0)}{h} -g'(t_0) \right\| < \frac{\eps}{2}
\end{equation}
whenever $0 < |h|<\delta$.

\underline{Case 1}: For some large $n_0 \in \N$, suppose there exist atomic intervals $[s_1,s_2] \in B_{n_0+1}$ and $[t_1,t_2] \in B_{n_0}$ with sufficiently small length such that $t_0 \in (s_1,s_2) \subseteq (t_1,t_2) \subseteq (t_0-\delta,t_0+\delta)$. If this is the case, by the martingale property of $(f_n,B_n)$ we have
$$
g(s_2)-g(s_1) = \int_{s_1}^{s_2} f_{n_0+1} (t_0) \,d\tau \quad \text{and} \quad g(t_2)-g(t_1) = \int_{t_1}^{t_2} f_{n_0} (t_0) \,d\tau.
$$
Therefore, using \eqref{equation:if-Der} and from
$$
\dfrac{g(t_2)-g(t_1)}{t_2-t_1} = \dfrac{t_2-t_0}{t_2-t_1} \cdot \dfrac{g(t_2)-g(t_0)}{t_2-t_0} + \dfrac{t_0-t_1}{t_2-t_1} \cdot \dfrac{g(t_0)-g(t_1)}{t_0-t_1},
$$
we have actually obtained that
$$
\eps \leq \|f_{n_0}(t_0)-f_{n_0+1}(t_0)\| = \left\| \frac{g(t_2)-g(t_1)}{t_2-t_1} - \frac{g(s_2)-g(s_1)}{s_2-s_1} \right\| < \eps,
$$
which is impossible.

\underline{Case 2}: Otherwise, suppose there exists a nonincreasing sequence $(t_n) \subseteq (t_0,1)$ converging to $t_0$ such that each interval $[t_0,t_n]$ is atomic in $B_n$ for each sufficiently large $n \in \N$ without loss of generality. So for a sufficiently large $n_0 \in \N$ and $t_0<s<t_{n_0+1}$, we have a contradiction
\begin{align*}
\eps &\leq \left\|f_{n_0}(s) - f_{n_0+1}(s)\right\| \\
&= \left\| \frac{g(t_{n_0})-g(t_0)}{t_{n_0}-t_0} - \frac{g(t_{n_0+1})-g(t_0)}{t_{n_0+1}-t_0} \right\| < \eps.
\end{align*}

In order to obtain the promised result, fix any $z_0^* \in S_{X^*}$ which attains its norm at $z \in S_X$ and define the function $\tilde{g} \in \Lip(X,Y)$ by
$$
\tilde{g}(x) := g(z_0^*(x))
$$
for $x \in X$, where $g \in \Lip(\R,Y)$ is defined in (\ref{equation:define-g}). It is immediate that $\|\tilde{g}\|=1$ with $\|\tilde{g}(z)-\tilde{g}(0)\|=1$. Assume similarly $\tilde{g} \in \D(X,Y)$, that is, there exists $x_0 \in X$ and $e_0 \in S_X$ such that $\tilde{g}'(x_0,e_0)$ exists and $\|\tilde{g}'(x_0,e_0)\|=1$. Fix $\delta>0$ such that
$$
\left\| \frac{\tilde{g}(x_0+he_0)-\tilde{g}(x_0)}{h} -\tilde{g}'(x_0,e_0) \right\| < \frac{\eps}{2}
$$
whenever $0<|h|<\delta$. We may assume that $|z_0^*(e_0)| = 1$, otherwise
$$
\left\| \frac{g(z_0^*(x_0+he_0))-g(z_0^*(x_0))}{h} \right\| = |z_0^*(e_0)| \left\| \frac{g(z_0^*(x_0+he_0))-g(z_0^*(x_0))}{z_0^*(x_0^*+he_0) - z_0^*(x_0)} \right\| \leq |z_0^*(e_0)| \|g\|
$$
does not converge to 1. If $z_0^*(e_0) = - 1$, just consider $-e_0$ instead of $e_0$. It follows that we can again choose suitable atomic intervals in $B_{n_0}$ whose lengths are sufficiently small. Analogously, there exist $(s_1,s_2) \subseteq (t_1,t_2)$ and $s \in (z_0^*(x_0) + s_1,z_0^*(x_0) + s_2)$ such that $[z_0^*(x_0) + s_1,z_0^*(x_0) + s_2] \in B_{n_0+1}$, $[z_0^*(x_0) + t_1,z_0^*(x_0) + t_2] \in B_{n_0}$ and $(z_0^*(x_0) + t_1,z_0^*(x_0) + t_2) \in (s-\delta,s+\delta)$, which implies that
\begin{align*}
\eps &\leq \|f_{n_0}(s) - f_{n_0+1}(s)\| \\
&= \left\| \frac{g(z_0^*(x_0+t_2e_0))-g(z_0^*(x_0+t_1e_0))}{t_2-t_1} - \frac{g(z_0^*(x_0+s_2e_0))-g(z_0^*(x_0+s_1e_0))}{s_2-s_1} \right\| < \eps.
\end{align*}
\end{proof}

Consequently, Theorem \ref{theorem:RNP-Der} in conjuction with Proposition \ref{prop:renorm} yields the following characterization on the \textup{RNP}.

\begin{cor}
Let $X$ and $Y$ be Banach spaces. Then, the following statements are equivalent.
\begin{enumerate}
\item[\textup{(a)}] $Y$ has the \textup{RNP}.
\item[\textup{(b)}] $\SA(X,Z) \subseteq \D(X,Z)$ holds for every renorming $Z$ of $Y$.
\end{enumerate}
\end{cor}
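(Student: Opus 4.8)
The plan is to assemble the three preceding results: Theorem~\ref{theorem:CCM}.(a), Theorem~\ref{theorem:RNP-Der}, and Proposition~\ref{prop:renorm}. Neither implication requires a new construction; the whole content of the corollary lies in recognizing how the quantifier ``for every renorming'' interacts with the isomorphic (in)variance of the two properties involved. I would therefore prove the two implications separately and devote one remark to the role of the quantifier.

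For the implication (a)$\Rightarrow$(b), I would first invoke the classical fact that the \textup{RNP} is an isomorphic property, so that if $Y$ has the \textup{RNP} then so does every renorming $Z$ of $Y$. Fixing such a $Z$, Theorem~\ref{theorem:CCM}.(a) applied to the pair $(X,Z)$ gives at once $\SA(X,Z) \subseteq \D(X,Z)$. As $Z$ was an arbitrary renorming, (b) follows. This direction is entirely routine once the isomorphic invariance of the \textup{RNP} is recalled.

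For the converse (b)$\Rightarrow$(a), I would argue by contraposition. Suppose $Y$ fails the \textup{RNP}. By Proposition~\ref{prop:renorm} there is a renorming $Z$ of $Y$ that fails the extremal \textup{RNP}. The contrapositive of Theorem~\ref{theorem:RNP-Der}, applied with this range space $Z$ and the given domain $X$, then yields $\SA(X,Z) \not\subseteq \D(X,Z)$; hence (b) fails for this single renorming $Z$. Thus the failure of the \textup{RNP} for $Y$ forces the failure of (b), which is exactly the contrapositive of the desired implication.

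The point that deserves a sentence of care, rather than being a genuine obstacle, is the role of the renorming quantifier in (b). One cannot simply replace (b) by the plain inclusion $\SA(X,Y) \subseteq \D(X,Y)$, since Theorem~\ref{theorem:RNP-Der} only delivers the extremal \textup{RNP}, and the extremal \textup{RNP} is not known to be an isomorphic property; a single inclusion therefore controls only $Y$ in its given norm. It is precisely by ranging over all renormings $Z$ and invoking Proposition~\ref{prop:renorm} (which manufactures a norm failing the extremal \textup{RNP} whenever the \textup{RNP} fails) that one recovers the full strength of the \textup{RNP}. Consequently the only external ingredient needed is the isomorphic invariance of the \textup{RNP}, and the remainder is bookkeeping of the earlier statements.
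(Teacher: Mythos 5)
Your proof is correct and follows exactly the route the paper intends: the forward direction via the isomorphic invariance of the \textup{RNP} together with Theorem~\ref{theorem:CCM}.(a), and the converse by contraposition through Proposition~\ref{prop:renorm} and Theorem~\ref{theorem:RNP-Der}. The paper gives no separate proof but states the corollary as an immediate consequence of precisely these two results, so your write-up simply makes explicit the same argument.
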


For duals of separable spaces, the characterization can be expressed as follows with the aid of Corollary \ref{cor:sep-dual}.

\begin{cor}
Let $X$ and $Y$ be Banach spaces and $Y$ is separable. Then, the following statements are equivalent.
\begin{enumerate}
\item[\textup{(a)}] $Y^*$ has the \textup{RNP}.
\item[\textup{(b)}] $\SA(X,Y^*) \subseteq \D(X,Y^*)$.
\end{enumerate}
\end{cor}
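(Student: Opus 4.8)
The plan is to deduce this entirely from the machinery already assembled, treating the dual $Y^*$ as the range space throughout and invoking separability only at the final step. The implication (a)$\Rightarrow$(b) is immediate: if $Y^*$ has the \textup{RNP}, then Theorem \ref{theorem:CCM}(a) applied with range space $Y^*$ gives $\SA(X,Y^*) \subseteq \D(X,Y^*)$ at once, for any domain $X$.

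For the reverse implication (b)$\Rightarrow$(a), I would first apply Theorem \ref{theorem:RNP-Der} with range space $Y^*$: the hypothesis $\SA(X,Y^*) \subseteq \D(X,Y^*)$ yields that $Y^*$ has the extremal \textup{RNP}. The only remaining task is then to upgrade ``extremal \textup{RNP}'' to the full \textup{RNP} for $Y^*$. This is exactly the content of Corollary \ref{cor:sep-dual}: since $Y$ is separable, $Y^*$ has the \textup{RNP} if and only if it has the extremal \textup{RNP}. Combining the two facts gives that $Y^*$ has the \textup{RNP}, which is statement (a).

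The subtle point---indeed the only place where any genuine content enters---is this last upgrade. For a general Banach space the implication ``extremal \textup{RNP} $\Rightarrow$ \textup{RNP}'' is not known, which is precisely why the preceding general corollary had to route through a change of norm via Proposition \ref{prop:renorm}. Here, separability of $Y$ removes that obstruction entirely: it is what makes Corollary \ref{cor:sep-dual} applicable and lets us identify the two properties for $Y^*$ without renorming. I therefore expect the proof to be very short, with essentially all the weight resting on correctly lining up the hypotheses of Theorem \ref{theorem:RNP-Der} and Corollary \ref{cor:sep-dual} at the single range space $Y^*$; no separate construction or estimate should be required beyond those two citations.
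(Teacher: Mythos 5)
Your proposal is correct and follows exactly the route the paper intends: (a)$\Rightarrow$(b) via Theorem \ref{theorem:CCM}(a), and (b)$\Rightarrow$(a) by combining Theorem \ref{theorem:RNP-Der} (to get the extremal \textup{RNP} for $Y^*$) with Corollary \ref{cor:sep-dual} (to upgrade to the \textup{RNP} using separability of $Y$). You also correctly identified why separability replaces the renorming step of Proposition \ref{prop:renorm} used in the general corollary.
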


Recall that a Lipschitz map $f \in \Lip(X,Y)$ \emph{attains its norm locally directionally} \cite{KMS} if there exist a sequence of pairs $((p_n,q_n)) \subseteq \widetilde{X}$ and points $y \in \|f\| S_Y$, $u \in S_X$ and  $v \in X$ such that
$$
\lim_n \frac{f(p_n)-f(q_n)}{\|p_n-q_n\|} = y, \quad \lim_n \frac{p_n-q_n}{\|p_n-q_n\|} = u \quad \text{and} \quad \lim_n \,(p_n, q_n) = (v, v).
$$
As an easy observation, every maximal derivative attaining Lipschitz map attains its norm locally directionally. The study of relation between the sets $\SA(X,Y)$ and $\D(X,Y)$ was first considered in \cite{CCM}. They presented an explicit example of a Lipschitz map $f \in \Lip(\R,L_1(\R))$ such that $f$ strongly attains its norm but fails to attain its maximal derivative, and in fact, does not even attain its norm locally directionally (see \cite[Example 2.5]{CCM}). However, we will discover in the next example that this is not always the case. Observe from Example \ref{example:extremal-RNP} and Theorem \ref{theorem:RNP-Der} that $\SA(\R,c_0) \not\subseteq \D(\R,c_0)$.

\begin{example}\label{example}
Consider the Lipschitz map $f \in \Lip(\R,c_0)$ defined by
$$
f(t) := \max\{\min\{t,1\},0\}\, e_1 + \sum_{n=2}^\infty \sum_{j=1}^{2^{n-2}} \max \left\{ \frac{1}{2^{n-1}} - \left| t - \frac{2j-1}{2^{n-1}} \right| ,0 \right\}e_n \quad \text{for } t \in \R.
$$

\begin{figure}[h]
\centering
\includegraphics[width=0.8\textwidth]{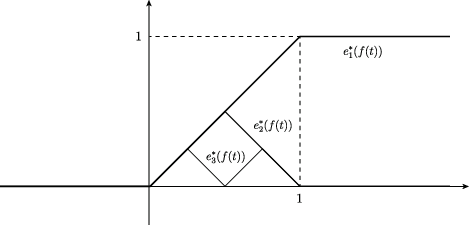}
       \caption{Visual representation of Example \ref{example}}
\end{figure}
We will directly show that $f$ strongly attains its norm while it does not attain its maximal derivative. To see this, notice first that $f$ is well-defined since for any $t \in \R$ and $n \in \N$, $|e_n^*(f(t))| \leq 1/2^{n-1}$ where $(e_n^*)$ is a standard basis of $\ell_1$. Given any $t_1 < t_2$ in $[0,1]$, it is clear that
$$
\|f(t_2)-f(t_1)\| \geq \|(t_2-t_1)e_1\| = t_2-t_1,
$$
so $\|f\| \geq 1$. On the other hand, for any fixed $n \in \N$, we have $|e_n^*(f(t_2)-f(t_1))| \leq |t_2-t_1|$ which implies $\|f\|=1$. It remains to verify that $f$ does not attain its maximal derivate at any point $t \in [0,1]$.

Suppose that there exists $t_0 \in [0,1]$ such that $f'(t_0)$ exists and $\|f'(t_0)\|=1$. As $f'(t_0) \in S_{c_0}$, we may find $n_0 \in \N$ so that $|e_{n_0}^*(f'(t_0))| < 1/2$. Now, choose $\delta>0$ such that whenver $0<|h|<\delta$, we have
$$
\left\| f'(t_0) - \frac{f(t_0+h)-f(t_0)}{h} \right\| < \frac{1}{2}.
$$
From the construction of $f$, assuming $t_0 < 1$ it is possible to find $0< h_0 < \delta$ such that
$$
\left|e_{n_0}^*\left(\frac{f(t_0+h_0)-f(t_0)}{h_0}\right)\right| = 1.
$$
However, this contradicts the fact that
$$
\left| e_{n_0}^* \left( f'(t_0) - \frac{f(t_0+h_0)-f(t_0)}{h_0} \right) \right| < \frac{1}{2}.
$$

On the other hand, define the sequence of distinct points $((p_n,q_n)) \subseteq \widetilde{\R}$ by
$$
(p_n,q_n) = \left( \frac{1}{2} + \frac{1}{2^n}, \frac{1}{2} - \frac{1}{2^n} \right) \quad \text{for each } n \in \N.
$$
By the construction of $f$, it is not difficult to see $e_k^*(f(p_n))=e_k^*(f(q_n))$ for every $n \in \N$ and $k \geq 2$. This gives us that
\begin{align*}
\lim_n \frac{f(p_n)-f(q_n)}{|p_n-q_n|} &= \lim_n \sum_{k=1}^\infty e_k^* \left( \frac{f(p_n)-f(q_n)}{|p_n-q_n|} \right) e_k \\
&= \lim_n e_1^* \left( \frac{f(p_n)-f(q_n)}{|p_n-q_n|} \right) e_1 = e_1.
\end{align*}
Since it is clear that $p_n-q_n = |p_n-q_n|$ for every $n \in \N$ and $\lim_n \,(p_n,q_n) = (1/2,1/2)$, this shows that $f$ attains its norm locally directionally. In fact, the Lipschitz map $f$ described above can be obtained by following the arguments in the proof of Theorem \ref{theorem:RNP-Der}, although we present it here in a more explicit manner.
\end{example}

We now present the main result regarding denseness, based on the direction of Theorem \ref{theorem:CCM}, which asserts that having the extremal \textup{RNP} is a necessary condition for the range space to ensure that every Lipschitz map can be approximated by those that either strongly attain their norm or attain their maximal derivative. It is noteworthy that this result derives its significance from the fact that the sets $\SA(X,Y)$ and $\D(X,Y)$ are both proper subsets of $\SA(X,Y) \cup \D(X,Y)$ precisely when $Y$ lacks the extremal \textup{RNP}, as observed by Theorem \ref{theorem:RNP-Der}. Moreover, all previously known examples of Lipschitz maps in $\Lip(X,Y)$ which cannot be approximated by strongly norm attaining (resp. maximal derivative attaining) Lipschitz maps belong to $\D(X,Y)$ (resp. $\SA(X,Y)$). We first introduce the following lemma, which is a trivial extension of \cite[Lemma 2.2]{KMS}.

\begin{lemma}\label{lemma:SNA-line}
Let $X$ and $Y$ be Banach spaces, and let $f \in \SA(X,Y)$ strongly attain its norm at some $(p,q) \in \widetilde{X}$. Then for any $r=(1-\lambda) p + \lambda q$ with $0< \lambda <1$, $f$ strongly attains its norm at both $(p,r)$ and $(r,q)$.
\end{lemma}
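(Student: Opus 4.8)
The plan is to exploit the collinearity of $p$, $r$, $q$ together with the sharpness of the triangle inequality, so that the single equality coming from the hypothesis is forced to split into two. First I would record the two elementary norm identities that follow from $r=(1-\lambda)p+\lambda q$: since $p-r=\lambda(p-q)$ and $r-q=(1-\lambda)(p-q)$, we have $\|p-r\|=\lambda\|p-q\|$ and $\|r-q\|=(1-\lambda)\|p-q\|$, and in particular $\|p-r\|+\|r-q\|=\|p-q\|$. Note that $0<\lambda<1$ guarantees $r\neq p$ and $r\neq q$, so that $(p,r),(r,q)\in\widetilde{X}$ and the slopes in question are well defined.

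Next I would combine the triangle inequality $\|f(p)-f(q)\|\leq \|f(p)-f(r)\|+\|f(r)-f(q)\|$ with the two Lipschitz estimates $\|f(p)-f(r)\|\leq \|f\|\,\|p-r\|$ and $\|f(r)-f(q)\|\leq \|f\|\,\|r-q\|$. Summing the latter two and invoking the additivity identity above yields $\|f(p)-f(r)\|+\|f(r)-f(q)\|\leq \|f\|\,\|p-q\|$. On the other hand, the hypothesis that $f$ strongly attains its norm at $(p,q)$ reads $\|f(p)-f(q)\|=\|f\|\,\|p-q\|$, so the entire chain of inequalities collapses into a chain of equalities.

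Finally I would read off the conclusion from this forced equality. Because $\|f(p)-f(r)\|\leq \|f\|\,\|p-r\|$ and $\|f(r)-f(q)\|\leq \|f\|\,\|r-q\|$ hold individually, while their sum already equals $\|f\|\bigl(\|p-r\|+\|r-q\|\bigr)$, neither of the two slacks can be strictly positive; hence $\|f(p)-f(r)\|=\|f\|\,\|p-r\|$ and $\|f(r)-f(q)\|=\|f\|\,\|r-q\|$. These are precisely the assertions that $f$ strongly attains its norm at both $(p,r)$ and $(r,q)$.

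I do not anticipate a genuine obstacle here, as this is an \emph{equality in the triangle inequality} argument of a routine type. The only point requiring care is that one must deploy all three ingredients simultaneously --- the triangle inequality, \emph{both} Lipschitz bounds, and the additivity $\|p-r\|+\|r-q\|=\|p-q\|$ --- since dropping any one of them leaves the equality in the remaining factor undetermined.
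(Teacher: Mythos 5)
Your proposal is correct and is essentially the paper's own argument: the paper proves the lemma by exactly the same chain $\|f\|\,\|p-q\| = \|f(p)-f(q)\| \leq \|f(p)-f(r)\|+\|f(r)-f(q)\| \leq \|f\|(\|p-r\|+\|r-q\|) = \|f\|\,\|p-q\|$ and reads off the two forced equalities. Your write-up merely makes explicit the collinearity identities and the ``no positive slack'' step that the paper leaves implicit.
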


\begin{proof}
Observe that
\begin{align*}
\|f\| \|p-q\| = \|f(p)-f(q)\| &\leq \|f(p)-f(r)\| + \|f(r)-f(q)\| \\
&\leq \|f\| ( \|p-r\| + \|r-q\|) \\
&= \|f\| \|p-q\|.
\end{align*}
It follows from the equality that $f$ strongly attains its norm at $(p,r)$ and $(r,q)$.
\end{proof}

\begin{theorem}\label{theorem:SNA-Der-dense}
Let $Y$ be a Banach space. If $\SA(X,Y) \cup \D(X,Y)$ is dense in $\Lip(X,Y)$ for some Banach space $X$, then $Y$ has the extremal \textup{RNP}.
\end{theorem}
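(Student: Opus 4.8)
The plan is to prove the contrapositive: assuming $Y$ fails the extremal \textup{RNP}, I will show that $\SA(X,Y) \cup \D(X,Y)$ is not dense for \emph{any} $X$. Since $\overline{\SA(X,Y) \cup \D(X,Y)} = \overline{\SA(X,Y)} \cup \overline{\D(X,Y)}$, it suffices to produce a single $g$ lying in neither closure. I would first construct such a $g \in \Lip(\R,Y)$ together with a radius $\rho>0$ so that the Lipschitz ball $B(g,\rho)$ meets neither $\SA(\R,Y)$ nor $\D(\R,Y)$, and then pass to an arbitrary domain $X$ by composing with a norming functional $z_0^* \in S_{X^*}$, exactly as in the final part of the proof of Theorem~\ref{theorem:RNP-Der} (with the obvious modifications needed to transfer statements about closures).

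For the construction, fix the martingale $(f_n,B_n)$, the constant $\eps>0$ and the farthest point $z$ furnished by Lemma~\ref{lemma:RNP-martingale}. The essential new idea is to \emph{dilute} the martingale function by a fat Cantor set so that the maximal slope is approached but never attained. I would fix a fat Cantor set $K \subseteq [0,1]$ with $\lambda(K)>0$ and complementary intervals $(a_i,b_i)$, choose $\delta_i \downarrow 0$, and define $g$ self-similarly: set the derivative data $\Gamma$ equal to $0$ on $K$, and on each $(a_i,b_i)$ let $g$ be $(1-\delta_i)$ times an affinely rescaled copy of $g$ itself, carrying its own martingale oscillation of size $\ge \eps(1-\delta_i)$; finally put $g(t) = \int_0^t \Gamma$. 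Since all values of $\Gamma$ lie in $B_Y$ we get $\|g\| \le 1$, while the slopes inside the $i$-th copy reach up to $1-\delta_i$, so $\|g\|=1$. Because $\Gamma \equiv 0$ on every rescaled copy of $K$, no finite pair realizes slope $1$: a subinterval either meets some level's Cantor set in positive measure, hence contains a density point at which the local slopes tend to $0$, or sits inside finitely many nested copies and is governed by a factor $1-\delta_i < 1$.

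To see $g \notin \overline{\D(\R,Y)}$, I would run the two-case analysis of Theorem~\ref{theorem:RNP-Der}: at points where the local slopes of $g$ stay near $1$ (deep inside copies), the embedded martingale increments force the difference quotients to oscillate by at least $\eps/2$ across consecutive atomic scales, whereas at points where the local slopes tend to $0$ no derivative of norm $\|g\|$ can occur. A perturbation $h = g+p$ with $\|p\| < \rho < \eps/4$ changes every difference quotient by at most $\rho$, so the oscillation $\ge \eps/2$ survives; hence no $h \in B(g,\rho)$ admits a directional derivative of norm $\|h\|$, giving $B(g,\rho) \cap \D(\R,Y) = \varnothing$.

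The delicate point, and the step I expect to be the main obstacle, is $g \notin \overline{\SA(\R,Y)}$. Suppose $h = g+p$ with $\|p\|<\rho$ attains its norm at $(s_0,t_0)$. By the equality case of Lemma~\ref{lemma:SNA-line}, $h$ attains its norm on every sub-pair of $[s_0,t_0]$; choosing $y^* \in S_{Y^*}$ norming $h(t_0)-h(s_0)$ shows $y^* \circ h$ is affine with slope $\|h\| \ge 1-\rho$ there, so $\tfrac{1}{b-a}\int_a^b y^*(\Gamma) \ge 1-2\rho$ for every $[a,b] \subseteq [s_0,t_0]$. The genuine difficulty is that, $D$ being non-dentable, every slice of $D$ is fat, so this lower bound cannot be contradicted merely from the fact that the atom values are $\eps$-separated — this flat-face phenomenon is exactly what makes $c_0$ and $L_\infty[0,1]$ true counterexamples and is why the martingale alone (as in Theorem~\ref{theorem:CCM}\,(c)) does not suffice. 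This is precisely what forces the fat-Cantor dilution: since $\Gamma$ vanishes on every rescaled copy of $K$, the segment $[s_0,t_0]$ must contain a density point $\tau$ of some level's Cantor set, and on small intervals about $\tau$ the averages of $\|\Gamma\|$, hence of $y^*(\Gamma)$, tend to $0$, contradicting the bound $1-2\rho$. The self-similarity ensures this applies even when $[s_0,t_0]$ lies inside a copy, since a fixed-length segment can be nested in only finitely many copies before it must straddle a Cantor set. Thus $B(g,\rho)$ also avoids $\SA(\R,Y)$, and the proof is complete.
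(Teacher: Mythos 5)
Your proposal has a genuine gap at its core: the self-similar ``diluted'' function $g$ you describe does not define a nonzero function, so there is nothing for the rest of the argument to run on. Unwind your recursion: $\Gamma=0$ on $K$, and on each complementary interval $(a_i,b_i)$ the derivative data is $(1-\delta_i)$ times an affinely rescaled copy of $\Gamma$ itself. By induction, $\Gamma$ then vanishes on every rescaled copy of $K$ at every level; after $n$ levels the set not yet covered by some copy of $K$ has measure $(1-\lambda(K))^n\to 0$, so the union of all copies has full measure in $[0,1]$. Hence $\Gamma=0$ almost everywhere and $g\equiv 0$. The phrase ``carrying its own martingale oscillation'' does not repair this: nowhere in your recursion do the martingale functions $f_n$ of Lemma~\ref{lemma:RNP-martingale} actually enter the definition of $\Gamma$ --- the self-reference leaves no room for them. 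Moreover, the natural non-self-similar repair (put on $(a_i,b_i)$ a $(1-\delta_i)$-diluted rescaled copy of the martingale primitive $g_0(t)=\lim_n\int_0^t f_n(\tau)\,d\tau$) fails for a different reason: $g_0$ strongly attains its norm at the endpoints of $[0,1]$, so undiluting a single copy with $\delta_i$ small yields $f\in\SA(\R,Y)$ with $\|f-g\|\le\delta_i$, and such a $g$ would lie in $\overline{\SA(\R,Y)}$ after all. So your orientation --- zero on the Cantor set, action on its complement --- is fundamentally problematic, not just in need of polishing.

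The paper's construction is precisely the inversion of yours, and that orientation is what makes everything work: it keeps the martingale \emph{on} the fat Cantor set and kills the derivative \emph{off} it, namely
$$
g(t)=\lim_n\int_0^t f_n(\tau)\,\chi_C(\tau)\,d\tau .
$$
Then $g\notin\overline{\SA(\R,Y)}$ by an argument much simpler than your density-point one: a nearby $f\in\SA(\R,Y)$ together with Lemma~\ref{lemma:SNA-line} forces $\|g(s_2)-g(s_1)\|>0$ for all pairs in the attaining interval, while $C$ is nowhere dense, so $g$ is actually \emph{constant} on some subinterval --- contradiction. And $g\notin\overline{\D(\R,Y)}$ because a nearby $f$ with a derivative of norm $\|f\|$ at $t$ forces the density of $C$ near $t$ to be close to $1$, whence the difference quotients of $g$ locally track those of $g_0$, which oscillate by $\eps$ at all small scales (the argument of Theorem~\ref{theorem:RNP-Der}); this is where the martingale does its work, in your sketch as well. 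Your high-level instincts are sound --- the contrapositive, $\overline{A\cup B}=\overline{A}\cup\overline{B}$, the correct diagnosis that $g_0$ alone lies in $\SA(\R,Y)$ and therefore must be diluted, the norming-functional transfer to general $X$, and the affinity argument via a norming $y^*$ --- but without a valid construction the proof does not stand.
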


\begin{proof}
We first show that $\SA(\R,Y) \cup \D(\R,Y)$ is not dense in $\Lip(\R,Y)$ when $Y$ fails the extremal \textup{RNP}. If $Y$ fails the extremal \textup{RNP}, then we can consider the Lipschitz map $g \in \Lip(\R,Y)$ defined by
$$
g(t) = \lim_n \int_0^t f_n(\tau) \chi_C(\tau) \,d\tau
$$
on $[0,1]$, with its values extended constantly on the endpoints of $[0,1]$ outside, where $(f_n,B_n)$ is the martingale constructed with $\eps>0$ in Lemma \ref{lemma:RNP-martingale} and $C$ is a fat Cantor set with a positive measure. From the construction of $g$, we have
$$
\lim_{h \to 0^+} \left\|\frac{g(h)-g(0)}{h}\right\| = 1.
$$
More precisely, recall the function defined in the proof of Theorem \ref{theorem:RNP-Der}, namely
\begin{equation}\label{equation:define-g0}
g_0(t) := \lim_n \int_0^t f_n(\tau)\, d\tau \qquad \text{for } t \in [0,1]
\end{equation}
and extended constantly on $\R \setminus [0,1]$. Given any $\eta>0$ we can choose $\delta>0$ such that
$$
\int_0^h \chi_C(\tau) \, d\tau > h(1- \eta)
$$
whenever $0 < h<\delta$. This shows that
$$
\frac{\displaystyle \left\|g(h)- \lim_n \int_0^h f_n(\tau) \,d\tau \right\|}{h} = \frac{\displaystyle \left\| \lim_n \int_0^h f_n(\tau) [1-\chi_C(\tau)]\,d\tau \right\|}{h} < \eta,
$$
and the argument in the proof of Theorem \ref{theorem:RNP-Der} gives the conclusion. To see that $\|g\|=1$, fix any $t_1 \leq t_2$ in $[0,1]$ so that we obtain
\begin{align*}
\|g(t_2)-g(t_1)\| &= \lim_n \left\| \int_{t_1}^{t_2} f_n(\tau) \chi_C(\tau) \,d\tau \right\| \leq \int_{t_1}^{t_2} \chi_C(\tau) \,d\tau \leq t_2-t_1.
\end{align*}
Suppose now that $g \in \overline{\SA(\R,Y)}$ and there exists $f \in \SA(\R,Y)$ with $\|f\|=1$ such that $\|f-g\|<1/2$. Let $t_1 < t_2$ in $\R$ be such that $\|f(t_2)-f(t_1)\|= t_2-t_1$. By Lemma \ref{lemma:SNA-line}, we have $\|f(s_2)-f(s_1)\|=|s_2-s_1|$ for any $s_1,s_2 \in [t_1,t_2]$. Therefore,
$$
\|g(s_2)-g(s_1)\| \geq \|f(s_2)-f(s_1)\| - \|f-g\||s_2-s_1| >0
$$
for any $s_1 \neq s_2 \in [t_1,t_2]$. However, by the choice of $C$, this is impossible as every interval contains a subinterval which does not intersect $C$.

In order to prove $g \notin \overline{\D(\R,Y)}$, assume there exists $f \in \D(\R,Y)$ with $\|f\|=1$ such that $\|f-g\|<\eps/6$, say $f'(t)$ exists with $\|f'(t)\|=1$ for some $t \in [0,1]$. So there exists $\delta>0$ such that
$$
\left\| \frac{f(t+h)-f(t)}{h} - f'(t) \right\| < \dfrac{\eps}{6}
$$
whenever $0<|h|<\delta$. Observe here that we can choose $0<\delta_0<\delta$ such that
\begin{equation}\label{equation:Cantor-approx}
\left\| \dfrac{g(t+h)-g(t)}{h} - \dfrac{g_0(t+h)-g_0(t)}{h} \right\| < \dfrac{\eps}{6}
\end{equation}
for every $0<|h|<\delta_0$. Indeed, $\|f-g\|<\eps/6$ only makes sense when
$$
\left\| \dfrac{g(t+h)-g(t)}{h} \right\| > 1- \dfrac{\eps}{6}
$$
for small enough $h$, and thus there exists $\delta_0>0$ such that
$$
\int_{\min \{t,t+h\}}^{\max \{t,t+h\}} [ 1 - \chi_C(\tau) ] \, d\tau < \dfrac{h \eps}{6}
$$
whenever $0 < |h|<\delta_0$. On the other hand, there exists $0<|h_0|<\delta_0$ such that
$$
\left\| \dfrac{g_0(t+h_0)-g_0(t)}{h_0} - f'(t) \right\| \geq \dfrac{\eps}{2},
$$
since $g_0$ does not have any $\eps/2$-differentiability points in $[0,1]$ (see the proof of Theorem \ref{theorem:RNP-Der} or \cite[Theorem 5.21]{BL}). Thus we have
$$
\|g-f\| \geq \left\| \dfrac{g_0(t+h_0)-g_0(t)}{h_0} -f'(t) \right\| - \left\| \dfrac{f(t+h_0)-f(t)}{h_0} - f'(t) \right\| - \dfrac{\eps}{6} \geq \dfrac{\eps}{6},
$$
which leads to a contradiction.

Let us now consider $\tilde{g} \in \Lip(X,Y)$ defined for each $x \in X$ by
$$
\tilde{g}(x) := g(z_0^*(x)),
$$
where $z_0^* \in S_{X^*}$ attains its norm at $z \in S_X$ as in Theorem \ref{theorem:RNP-Der}. It is clear that $\|\tilde{g}\|=1$. We will show that $\tilde{g}$ cannot be approximated by Lipschitz maps in $\SA(X,Y) \cup \D(X,Y)$. Although the overall progression is similar, we provide a detailed proof due to significant differences. First, assume that there exists $\tilde{f} \in \SA(X,Y)$ with $\|\tilde{f}\|=1$ such that $\|\tilde{f}-\tilde{g}\|<1/2$. Let us say $(p,q) \in \widetilde{X}$ satisfy that $\|\tilde{f}(p)-\tilde{f}(q)\| = \|p-q\|$. By applying Lemma \ref{lemma:SNA-line}, for every distinct elements $r,s \in X$ which lie in the line joining $p$ and $q$, we have
\begin{align*}
\|\tilde{g}(r)-\tilde{g}(s)\| &\geq \|\tilde{f}(r)-\tilde{f}(s)\| - \|\tilde{f}-\tilde{g}\| \|r-s\| \\
&= \|r-s\| - \|\tilde{f}-\tilde{g}\|\|r-s\| > 0.
\end{align*}
But this is impossible from the construction of $\tilde{g}$. It remains to show that $\tilde{g} \notin \overline{\D(X,Y)}$. Assume again that there exists $\tilde{f} \in \D(X,Y)$ with $\|\tilde{f}\|=1$ such that $\|\tilde{f}-\tilde{g}\|<\eps/10$, and that $\tilde{f}'(x,e)$ exists with $\|\tilde{f}'(x,e)\|=1$ for some $x \in X$ and $e \in S_X$. We may find $\delta>0$ so that
$$
\left\| \frac{\tilde{f}(x+he)-\tilde{f}(x)}{h} - \tilde{f}'(x,e) \right\| < \dfrac{\eps}{10}
$$
whenever $0<|h|<\delta$. This gives that we may assume $z_0^*(e) > 1 - \eps/5$ since
$$
|z_0^*(e)| \|g\| \geq \left\| \dfrac{g(z_0^*(x+he))-g(z_0^*(x))}{h} \right\| \geq \left\| \frac{\tilde{f}(x+he)-\tilde{f}(x)}{h} \right\| -\|\tilde{f}-\tilde{g}\| > 1- \dfrac{\eps}{5},
$$
and by replacing $e$ with $-e$ if necessary. Let $\tilde{g_0}(x) := g_0(z_0^*(x))$ where $g_0$ is defined as in (\ref{equation:define-g0}). Then, we can choose $0<\delta_0<\delta$ such that
$$
\left\| \dfrac{g_0(z_0^*(x+he))-g_0(z_0^*(x))}{hz_0^*(e)} - \dfrac{g(z_0^*(x+he))-g(z_0^*(x))}{hz_0^*(e)} \right\| < \dfrac{\eps}{10}
$$
whenever $0<|h|<\delta_0$, following the argument in \eqref{equation:Cantor-approx}. Finally, applying again that $g_0$ has no $\eps/2$-differentiability points, we can find $0<|h_0|<\delta_0$ such that
$$
\left\| \dfrac{g_0(z_0^*(x+h_0e))-g_0(z_0^*(x))}{h_0z_0^*(e)} - \widetilde{f}'(x,e) \right\| \geq \dfrac{\eps}{2}.
$$
This leads to the inequality
\begin{align*}
\dfrac{\eps}{10} &\geq \|\tilde{g}-\tilde{f}\| \geq \left\| \dfrac{\tilde{g}(x+h_0e)-\tilde{g}(x)}{h_0} - \tilde{f}'(x,e) \right\| - \left\| \dfrac{\tilde{f}(x+h_0e)-\tilde{f}(x)}{h_0} - \tilde{f}'(x,e) \right\| \\
&\geq \left\| \dfrac{\tilde{g_0}(x+h_0e)-\tilde{g_0}(x)}{h_0z_0^*(e)} - \tilde{f}'(x,e) \right\| - \left\| \dfrac{\tilde{g_0}(x+h_0e)-\tilde{g_0}(x)}{h_0z_0^*(e)} - \dfrac{\tilde{g}(x+h_0e)-\tilde{g}(x)}{h_0z_0^*(e)} \right\| \\
&\quad- \left\| \dfrac{\tilde{g}(x+h_0e)-\tilde{g}(x)}{h_0z_0^*(e)} - \dfrac{\tilde{g}(x+h_0e)-\tilde{g}(x)}{h_0} \right\| - \left\| \dfrac{\tilde{f}(x+h_0e)-\tilde{f}(x)}{h_0} - f'(x,e) \right\| \\
&> \dfrac{\eps}{2} - \dfrac{\eps}{10} -\dfrac{\eps}{5} - \dfrac{\eps}{10} = \dfrac{\eps}{10},
\end{align*}
which is absurd.
\end{proof}

Again from the renorming perspective, we are able to obtain the following characterization on the \textup{RNP} when the domain space is $\R$.

\begin{cor}
Let $Y$ be a Banach space. Then, the following statements are equivalent.
\begin{enumerate}
\item[\textup{(a)}] $Y$ has the \textup{RNP}.
\item[\textup{(b)}] $\SA(\R,Z) \cup \D(\R,Z)$ is dense in $\Lip(\R,Z)$ for every renorming $Z$ of $Y$.
\end{enumerate}
\end{cor}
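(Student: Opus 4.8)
The plan is to derive both implications by assembling results already available in the paper, together with the standard fact that the RNP is an isomorphic invariant of Banach spaces.

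For the implication (a) $\Rightarrow$ (b), I would argue directly. Fix any renorming $Z$ of $Y$. Since the RNP is preserved under isomorphisms and $Z$ is isomorphic to $Y$, the space $Z$ inherits the RNP from $Y$. Theorem \ref{theorem:CCM}(b) then guarantees that $\D(\R,Z)$ is dense in $\Lip(\R,Z)$. As $\D(\R,Z) \subseteq \SA(\R,Z) \cup \D(\R,Z)$, the larger set is a fortiori dense in $\Lip(\R,Z)$, which is exactly the assertion in (b). Because $Z$ was an arbitrary renorming, this holds for every renorming of $Y$.

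For the converse (b) $\Rightarrow$ (a), I would proceed by contraposition. Assume $Y$ fails the RNP. By Proposition \ref{prop:renorm}, there is a renorming $Z$ of $Y$ which fails the extremal RNP. Now I invoke the contrapositive of Theorem \ref{theorem:SNA-Der-dense} with domain $X = \R$: since $Z$ lacks the extremal RNP, the set $\SA(\R,Z) \cup \D(\R,Z)$ is not dense in $\Lip(\R,Z)$. This exhibits a particular renorming $Z$ for which the density asserted in (b) fails, so statement (b) cannot hold. Hence $\neg$(a) implies $\neg$(b), which is the desired implication.

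The argument is essentially a matter of bookkeeping, so I do not expect a genuine obstacle; the points requiring care are (i) explicitly recording that the RNP transfers to every renorming, which is where the ``for every renorming'' quantifier in (b) is consumed in the forward direction, and (ii) applying Theorem \ref{theorem:SNA-Der-dense} with the specific domain $X = \R$ so as to match the formulation of (b). The genuinely elegant point, rather than a difficulty, is that the quantifier over all renormings is precisely what bridges the gap between the extremal RNP (the conclusion supplied by Theorem \ref{theorem:SNA-Der-dense}) and the full RNP appearing in (a): we never need to know whether the extremal RNP implies the RNP, because Proposition \ref{prop:renorm} lets a single failure of the extremal RNP under some renorming stand in for the failure of the RNP of $Y$. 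The substantive content of the corollary thus lives entirely in Theorem \ref{theorem:CCM}(b), Theorem \ref{theorem:SNA-Der-dense}, and Proposition \ref{prop:renorm}; the corollary merely packages them.
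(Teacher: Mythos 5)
Your proposal is correct and follows essentially the same route as the paper: the forward direction via Theorem \ref{theorem:CCM}(b) applied to each renorming (the paper additionally notes that $\SA(\R,Z) \subseteq \D(\R,Z)$ holds there), and the converse by combining Proposition \ref{prop:renorm} with the contrapositive of Theorem \ref{theorem:SNA-Der-dense} for $X=\R$. Your write-up merely spells out the bookkeeping (isomorphic invariance of the \textup{RNP}, the role of the renorming quantifier) that the paper leaves implicit.
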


\begin{proof}
(a)$\Rightarrow$(b) is a consequence of Theorem \ref{theorem:CCM}.(b). In fact, we have $\SA(\R,Z) \subseteq \D(\R,Z)$ in this case. (b)$\Rightarrow$(a) follows from Proposition \ref{prop:renorm} and Theorem \ref{theorem:SNA-Der-dense}.
\end{proof}

For duals of separable spaces, we have the following characterization.

\begin{cor}
Let $Y$ be a separable Banach space. Then, the following statements are equivalent.
\begin{enumerate}
\item[\textup{(a)}] $Y^*$ has the \textup{RNP}.
\item[\textup{(b)}] $\SA(\R,Y^*) \cup \D(\R,Y^*)$ is dense in $\Lip(\R,Y^*)$.
\end{enumerate}
\end{cor}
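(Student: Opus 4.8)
The plan is to deduce this equivalence directly from the three ingredients already assembled in the excerpt, following the same two-step pattern used in the proof of the preceding corollary, but with the renorming argument replaced by the separable-dual dictionary of Corollary \ref{cor:sep-dual}. No new construction should be needed: the substantive work has already been carried out in Theorem \ref{theorem:CCM}, Theorem \ref{theorem:SNA-Der-dense}, and Corollary \ref{cor:sep-dual}, so the task is purely one of assembly.

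For the implication (a)$\Rightarrow$(b), I would simply apply Theorem \ref{theorem:CCM}.(b) with the range space taken to be $Y^*$ itself. Thus, if $Y^*$ has the \textup{RNP}, then $\D(\R,Y^*)$ is dense in $\Lip(\R,Y^*)$; since $\D(\R,Y^*) \subseteq \SA(\R,Y^*) \cup \D(\R,Y^*)$, the larger set is a fortiori dense. (In this case one in fact also has $\SA(\R,Y^*) \subseteq \D(\R,Y^*)$ by Theorem \ref{theorem:CCM}.(a), so the union contributes nothing extra, but that observation is not needed for the argument.)

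For the converse (b)$\Rightarrow$(a), I would first invoke Theorem \ref{theorem:SNA-Der-dense} with $X=\R$ and range space $Y^*$: density of $\SA(\R,Y^*) \cup \D(\R,Y^*)$ in $\Lip(\R,Y^*)$ forces $Y^*$ to have the extremal \textup{RNP}. The decisive point is then that $Y$ is assumed separable, so Corollary \ref{cor:sep-dual} applies to the dual $Y^*$ and tells us that for such a dual the extremal \textup{RNP} and the \textup{RNP} coincide. Hence $Y^*$ has the \textup{RNP}, which is precisely (a).

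The main, and essentially only, subtlety to watch is the role of the separability hypothesis. Theorem \ref{theorem:SNA-Der-dense} yields only the a priori weaker extremal \textup{RNP}, and since the extremal \textup{RNP} is not known to coincide with the \textup{RNP} in general, it is separability of $Y$ — entering through Corollary \ref{cor:sep-dual} — that bridges the gap back to the full \textup{RNP}. Without separability one could conclude only the extremal \textup{RNP} in (a), so this is exactly where the mileage of the separable hypothesis is spent.
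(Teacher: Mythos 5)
Your proposal is correct and is essentially the paper's own (implicit) argument: the corollary is stated there without proof precisely because it follows by this exact assembly, with Theorem \ref{theorem:CCM}.(b) giving (a)$\Rightarrow$(b) and Theorem \ref{theorem:SNA-Der-dense} applied with $X=\R$ followed by Corollary \ref{cor:sep-dual} giving (b)$\Rightarrow$(a). Your remark that separability of $Y$ is spent exactly in upgrading the extremal \textup{RNP} to the \textup{RNP} via Corollary \ref{cor:sep-dual} correctly identifies the only nontrivial point.
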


Given that it remains unknown whether $\D(X,Y)$ is always dense in $\Lip(X,Y)$ whenever $Y$ has the \textup{RNP}, the general statement of a vector-valued version in terms of the \textup{RNP} is provided as below.

\begin{cor}
Let $X$ and $Y$ be Banach spaces. If $\SA(X,Z) \cup \D(X,Z)$ is dense in $\Lip(X,Z)$ for every renorming $Z$ of $Y$, then $Y$ has the \textup{RNP}.
\end{cor}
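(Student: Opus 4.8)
The plan is to argue by contraposition, reducing the statement directly to the two results already established: Proposition \ref{prop:renorm} and Theorem \ref{theorem:SNA-Der-dense}. So I would assume that $Y$ \emph{fails} the \textup{RNP} and produce a single renorming $Z$ of $Y$ for which the density hypothesis cannot hold, thereby contradicting the assumption that $\SA(X,Z) \cup \D(X,Z)$ is dense in $\Lip(X,Z)$ for \emph{every} renorming.

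First I would invoke Proposition \ref{prop:renorm}: since $Y$ fails the \textup{RNP}, there is a renorming $Z$ of $Y$ that fails the \emph{extremal} \textup{RNP}. The crucial structural point here is that the hypothesis is universally quantified over all renormings of $Y$, so it applies in particular to this specific $Z$; hence $\SA(X,Z) \cup \D(X,Z)$ is dense in $\Lip(X,Z)$ for the given $X$. Next I would apply Theorem \ref{theorem:SNA-Der-dense}, which asserts that whenever $\SA(X,Z) \cup \D(X,Z)$ is dense in $\Lip(X,Z)$ for some Banach space $X$, the range space $Z$ must have the extremal \textup{RNP}. This is incompatible with the choice of $Z$ as a renorming failing the extremal \textup{RNP}, and the contradiction forces $Y$ to have the \textup{RNP}.

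There is no genuine obstacle beyond correctly matching the quantifiers: the entire analytic content—the martingale construction from Lemma \ref{lemma:RNP-martingale} and the fat-Cantor-set perturbation in the proof of Theorem \ref{theorem:SNA-Der-dense}—has already been carried out, and the renorming mechanism is packaged in Proposition \ref{prop:renorm}. The only care required is to observe that "for every renorming" supplies exactly the instance $Z$ needed, so that the implication chain
$$
Y \text{ fails } \textup{RNP} \;\Longrightarrow\; \exists\, Z \text{ failing extremal } \textup{RNP} \;\Longrightarrow\; \SA(X,Z)\cup\D(X,Z) \text{ not dense in } \Lip(X,Z)
$$
closes off. This makes the corollary a clean contrapositive synthesis of the preceding two results, entirely parallel to the already-proved equivalence in the case $X=\R$, now stated for an arbitrary domain $X$.
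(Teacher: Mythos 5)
Your proposal is correct and follows exactly the paper's intended route: the paper leaves this corollary without a separate proof because, just as in the preceding corollary for $X=\R$, it is the direct combination of Proposition \ref{prop:renorm} (renorm $Y$ to fail the extremal \textup{RNP}) with Theorem \ref{theorem:SNA-Der-dense} applied to that renorming. Your handling of the quantifier over renormings is the only point of care, and you got it right.
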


\ \

\section{On the maximal approximation by affine property}\label{section:maximal-AAP}

As previously noted, it was studied in \cite{BJLPS} the approximability of Lipschitz maps by affine functions, a property known as the \textup{AAP}. In our context, we aim to determine whether the same approximation can be obtained under a specific condition where the affine function has a slope sufficiently close to that of the approximating Lipschitz map, and this concept will be formalized by the following definition.

\begin{definition}
We say that a pair of Banach spaces $(X,Y)$ has the \emph{maximal approximation by affine property} \textup{(}\emph{maximal} $\AAP$ in short\textup{)} if for every $\eps>0$, a Lipschitz map $f \in \Lipn(X,Y)$ and a ball $B \subseteq X$, there exists another ball $B_1 \subseteq B$ and an affine function $g: B_1 \to Y$ such that
$$
\Lipn(g) > \Lipn(f|_B) - \eps \quad \text{and} \quad \sup_{x \in B_1} \|g(x)-f(x)\| \leq \eps \diam(B_1) \Lipn(f).
$$
If we can find a function $c(\eps)>0$ such that $B_1$ can be chosen $\diam(B_1) \geq c\diam(B)$, then we say $(X,Y)$ has the \emph{maximal uniform approximation by affine property} \textup{(}\emph{maximal uniform} $\AAP$ in short\textup{)}.
\end{definition}

It is important to  note that this property does not guarantee that every Lipschitz map can be approximated by strongly norm attaining ones in the (semi-)norm sense, as the approximation in the preceding definition is not a Lipschitz number perturbation. The next result shows that for Lipschitz functionals on $\R$, we obtain a positive answer.

\begin{theorem}
The pair $(\R,\R)$ has the maximal $\AAP$.
\end{theorem}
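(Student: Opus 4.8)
The plan is to build the required subball directly, without any genuine shrinking: I would take $B_1$ to be an interval on which $f$ nearly realizes its Lipschitz number over $B$, and let $g$ be the secant line of $f$ across that interval. Set $L:=\Lipn(f|_B)$; if $L=0$ then $f|_B$ is constant and a constant $g$ finishes the proof, so assume $L>0$ (hence $\Lipn(f)\ge L>0$). Fix $\delta:=\tfrac12\min\{\eps,\,\eps\,\Lipn(f),\,L\}>0$. Using that $L$ is a supremum, choose $p\neq q$ in $B$ with $|f(p)-f(q)|/|p-q|>L-\delta$; replacing $f$ by $-f$ if necessary (which alters neither $\Lipn$ nor the conclusion) and writing $a<b$ for the two points, I may assume
$$
\bar s:=\frac{f(b)-f(a)}{b-a}>L-\delta,
$$
while $\bar s\le L$ holds automatically. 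Then I set $B_1:=[a,b]\subseteq B$ and let $g$ be the affine map through $(a,f(a))$ and $(b,f(b))$, so $\Lipn(g)=|\bar s|=\bar s$.

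The slope condition is immediate from the choice of $a,b$: since $\delta\le\eps$ we get $\Lipn(g)=\bar s>L-\delta\ge L-\eps=\Lipn(f|_B)-\eps$, and $\delta<L$ guarantees $\bar s>0$ so that indeed $\Lipn(g)=\bar s$. For the supremum estimate I would control $f-g$ on $[a,b]$ using only the restricted Lipschitz bound at the endpoints, with no recourse to differentiability. Writing $\bar\psi:=L-\bar s\in[0,\delta]$, the inequality $f(x)-f(a)\le L(x-a)$ yields $f(x)-g(x)\le\bar\psi(x-a)$, while $f(b)-f(x)\le L(b-x)$ yields $g(x)-f(x)\le\bar\psi(b-x)$; combining these,
$$
\sup_{x\in[a,b]}|f(x)-g(x)|\le\bar\psi\,(b-a)\le\delta\,\diam(B_1)\le\eps\,\Lipn(f)\,\diam(B_1),
$$
where the last step uses $\delta\le\eps\,\Lipn(f)$. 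Both requirements of the maximal $\AAP$ therefore hold with this $B_1$ and $g$.

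I do not expect a genuine obstacle here: for $(\R,\R)$ the entire argument rests on the two elementary endpoint estimates above, and in particular avoids the differentiability, martingale, and fat Cantor set machinery used in Section \ref{section:Der}. The only points requiring a little care are the harmless reductions — the trivial case $L=0$, the sign normalization $f\mapsto-f$, and keeping $\bar s$ positive so that $\Lipn(g)=\bar s$, which is why $L$ is retained in the minimum defining $\delta$. It is worth stressing that the interval $B_1=[a,b]$ may be arbitrarily small relative to $B$, since the near-extremal secant can occur on a tiny subinterval; this is exactly the freedom that the uniform version forbids, and it foreshadows the failure of the maximal uniform $\AAP$ for $(\R,\R)$ recorded afterwards.
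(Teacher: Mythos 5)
Your proof is correct, and it takes a genuinely different route from the paper's. The paper argues measure-theoretically: normalizing $\Lipn(f|_I)=1$, it uses a.e.\ differentiability of $f$ to pick a set $M$ of positive measure on which $f'>1-\eps$, applies the Lebesgue density theorem to find a subinterval $I_1$ in which $M$ has density greater than $1-\eps$, and takes $g$ of slope exactly $1$ with an additive constant chosen to split the error evenly, estimating $\sup_{I_1}|f-g|$ by integrating $g'-f'$. You instead choose $B_1=[a,b]$ so that the secant slope $\bar s$ of $f$ over $[a,b]$ nearly realizes $L=\Lipn(f|_B)$, take $g$ to be that secant, and trap $f$ near $g$ via the two one-sided estimates $f(x)-g(x)\le (L-\bar s)(x-a)$ and $g(x)-f(x)\le (L-\bar s)(b-x)$, both immediate from the Lipschitz bound applied at the endpoints; this is completely elementary (no differentiability, no density points, no integration), and your bookkeeping with $\delta=\tfrac12\min\{\eps,\eps\Lipn(f),L\}$ correctly delivers both conditions of the maximal $\AAP$, including the degenerate case $L=0$ and the sign normalization. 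What the paper's heavier machinery buys is an approximant of exactly maximal slope, $\Lipn(g)=\Lipn(f|_I)$ rather than merely $>\Lipn(f|_I)-\eps$, and its density/integration framework is the same circle of ideas exploited afterwards (via fat Cantor sets) to show that the maximal uniform $\AAP$ fails; your secant argument gives the weaker but sufficient slope bound, is shorter and self-contained, and, as you note, makes transparent why it yields no lower bound on $\diam(B_1)/\diam(B)$, which is consistent with that failure.
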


\begin{proof}
Let $0<\eps<1$, an interval $I \subseteq \R$ containing 0, $f \in \Lipn(I,\R)$ with $\Lipn(f)=1$ be given. Since $f$ is differentiable almost everywhere, we may think $\|f'\|_\infty = 1$. Let us write
$$
f(t) = f(0) + \int_{0}^t f'(s)\,ds \qquad \text{for } t \in I.
$$
Choose a measurable set $M \subseteq I$ with $\lambda(M)>0$ such that $f'(t)> 1 -\eps$ for every $t \in M$ without loss of generality. We may find a subinterval $I_1 \subseteq I$ so that
$$
\lambda(M \cap I_1) > (1-\eps) \lambda(I_1)> 0
$$
by applying the Lebesgue's density theorem. Indeed, there exists $t_0 \in I$ such that
$$
\lim_{\delta \to 0^+} \frac{\lambda\bigl(M \cap [t_0-\delta,t_0+\delta]\bigr)}{2\delta} =1.
$$
For such $I_1 = [a,b]$, define an affine function $g:I_1 \to \R$ by $g(t)=t+z$ where $z$ is chosen such that
$$
z = \frac{f(a)+f(b)-a-b}{2}.
$$
It is clear that $\Lipn(g) > \Lipn(f|_I) - \eps$ and by the choice of $z$ we have
\begin{align*}
\sup_{t \in I_1} |g(t)-f(t)| &= \frac{1}{2} \int_{I_1} [g'(t)-f'(t)]\, dt \\
&\leq \frac{1}{2} \left[ \diam(I_1) - (1 - 3\eps+\eps^2) \diam(I_1) \right] \leq \frac{3}{2} \eps \diam(I_1)
\end{align*}
from the fact that 
\begin{align*}
\int_{I_1} f'(s)\, ds &= \int_{I_1 \cap M} f'(s) \, ds + \int_{I_1 \setminus M} f'(s) \, ds \\
&\geq (1-\eps) \diam(I_1) (1 - \eps) - \eps \diam(I_1) = (1 - 3\eps + \eps^2) \diam(I_1).
\end{align*}
This shows that $(\R,\R)$ has the maximal $\AAP$.
\end{proof}

However, we should not anticipate the same conclusion when considering the uniform version of $\AAP$, which was a primary focus of \cite{BJLPS}, in the context of maximal affine functions. To verify this, we need a straightforward technical lemma regarding fat Cantor sets.

\begin{lemma}\label{lemma:fat-Cantor}
For fixed $0<c<1$ and $0<k \leq 1/4$, let $C$ be a fat Cantor set constructed by cutting out each part of the length $kc^n/4^{n-1}$ \textup{(}$2^{n-1}$ times in total\textup{)} of the interval $[0,1]$ in each $n^{\textup{th}}$ step. Then, we have
$$
\lambda(C \cap [a,b]) > \frac{1}{2}|b-a|
$$
for every interval $[a,b] \subseteq [0,1]$ with $|b-a| \geq c$.
\end{lemma}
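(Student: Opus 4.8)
The plan is to ignore entirely the precise geometric placement of the deleted intervals and instead control everything through the single quantity $\lambda([0,1]\setminus C)$, the total measure removed over the whole construction. Writing $\lambda(C\cap[a,b]) = |b-a| - \lambda([a,b]\setminus C)$, the desired inequality is equivalent to $\lambda([a,b]\setminus C) < \frac12|b-a|$, and since $[a,b]\subseteq[0,1]$ one has the crude but decisive bound $\lambda([a,b]\setminus C)\le\lambda([0,1]\setminus C)$ by monotonicity of Lebesgue measure.

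First I would compute the total deleted measure exactly. At the $n$-th step one removes $2^{n-1}$ intervals, each of length $kc^n/4^{n-1}$, so the amount deleted at step $n$ equals $2^{n-1}\cdot kc^n/4^{n-1} = 2k(c/2)^n$. Summing the resulting geometric series gives $\lambda([0,1]\setminus C) = \sum_{n=1}^\infty 2k(c/2)^n = \frac{2kc}{2-c}$.

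Then I would invoke the two hypotheses $k \le 1/4$ and $|b-a| \ge c$. The first yields $\lambda([0,1]\setminus C) \le \frac{c}{2(2-c)}$, and since $0 < c < 1$ forces $2 - c > 1$, this is strictly smaller than $\frac{c}{2}$. Combining with $|b-a| \ge c$ then gives $\lambda([a,b]\setminus C) \le \frac{c}{2(2-c)} < \frac{c}{2} \le \frac{|b-a|}{2}$, which is exactly the bound needed, and hence $\lambda(C\cap[a,b]) > \frac12|b-a|$.

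The only genuinely delicate point is the strictness: the estimate is tight precisely as $c\to 1^-$ with $k = 1/4$, where both $\lambda([0,1]\setminus C)$ and $c/2$ approach $1/2$, so it is essential to use $c < 1$ (hence $2-c>1$) as a strict inequality rather than as $c \le 1$. Beyond this I expect no obstacle; in particular, no information about how the deleted intervals are distributed inside $[a,b]$ is required, since the total removed mass across the entire unit interval is already below $c/2 \le |b-a|/2$. One may also record in passing that the same computation gives $\lambda(C) = 1 - \frac{2kc}{2-c} > 1/2 > 0$, confirming that $C$ is indeed a fat Cantor set.
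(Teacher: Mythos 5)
Your proof is correct and follows essentially the same route as the paper's: both compute the total removed measure $\frac{2kc}{2-c}$, bound it strictly below $\frac{c}{2}$ using $k\le 1/4$ together with the strict inequality $c<1$, and then conclude via monotonicity $\lambda([a,b]\setminus C)\le\lambda([0,1]\setminus C)$ combined with $|b-a|\ge c$. The only cosmetic difference is that the paper first simplifies to $\lambda(C)>1-2kc$ before invoking $k\le 1/4$, whereas you keep the exact sum and absorb $c<1$ at the end; the underlying estimates are identical.
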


\begin{proof}
By a straightforward calculation, we have
$$
\lambda(C) = 1 - \frac{kc}{1-c/2} > 1-2kc
$$
from $0<c<1$. So it follows that
\begin{align*}
\lambda(C \cap [a,b]) &= \lambda([a,b]) - \lambda([a,b] \setminus C) \geq \lambda([a,b]) - 1 + \lambda(C) > \dfrac{1}{2} |b-a|
\end{align*}
since $|b-a| \geq c$ and $0< k \leq 1/4$.
\end{proof}

We are now ready to prove the desired result on the maximal uniform $\AAP$ for the pair $(\R,\R)$.

\begin{theorem}
The pair $(\R,\R)$ fails the maximal uniform $\AAP$. That is, there exists $\eps>0$ such that for every $0<c<1$, there are an interval $I \subseteq \R$ and a nonzero $f \in \Lipn(I,\R)$ such that whenever $I_1 \subseteq I$ is an interval with $\diam(I_1) \geq c \diam(I)$ and $g : I_1 \to \R$ is an affine fucntion such that $\Lipn(g) > \Lipn(f|_I) - \eps$, we have
$$
\sup_{t \in I_1} |g(t)-f(t)| \geq \eps \diam(I_1)\Lipn(f).
$$
\end{theorem}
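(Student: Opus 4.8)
The plan is to exhibit a single function whose difference quotients behave erratically on \emph{every} interval of proportional length, so that no affine function of near-maximal slope can stay uniformly close. The natural candidate, in keeping with the constructions earlier in the paper, is a primitive of an indicator of a fat Cantor set. Specifically, fix a fat Cantor set $C \subseteq [0,1]$ as in Lemma \ref{lemma:fat-Cantor} with parameter $c$, take $I = [0,1]$, and define $f(t) := \int_0^t \chi_C(\tau)\,d\tau$. Then $\Lipn(f) = 1$ since $0 \le f' \le 1$ almost everywhere and $\lambda(C)>0$ forces the slope to reach $1$ on the density points of $C$; moreover $f$ is nondecreasing with $f'=\chi_C$ a.e. The key structural feature is that $C$ is nowhere dense, so every subinterval contains a subinterval disjoint from $C$, on which $f$ is constant (slope $0$), while Lemma \ref{lemma:fat-Cantor} guarantees that on any interval of length at least $c$ the set $C$ occupies more than half the length, so the \emph{average} slope exceeds $1/2$.

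Next I would fix the tension between these two facts quantitatively. Suppose $I_1 \subseteq I$ is an interval with $\diam(I_1) \ge c\,\diam(I) = c$, and let $g(t) = mt + b$ be affine with $\Lipn(g) = |m| > \Lipn(f|_I) - \eps = 1 - \eps$. Because $f$ is nondecreasing and its total increment over $I_1$ equals $\lambda(C \cap I_1) > \tfrac12 \diam(I_1)$ by Lemma \ref{lemma:fat-Cantor}, the slope $m$ must be positive (a negative $m$ would force $g$ to run opposite to the overall increase of $f$, contradicting uniform closeness at the endpoints of $I_1$), so in fact $m > 1 - \eps$. The heart of the argument is then to locate, inside $I_1$, a subinterval $[u,v]$ on which $f$ is constant: since $C$ is nowhere dense, $I_1 \setminus C$ contains an open interval $(u,v)$, and on $[u,v]$ we have $f(u) = f(v)$ while $g(v) - g(u) = m(v-u) > (1-\eps)(v-u) > 0$.

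From here the obstruction is forced by a standard endpoint estimate. On $[u,v]$ the affine $g$ increases by more than $(1-\eps)(v-u)$ while $f$ stays flat, so the deviation $g - f$ changes by more than $(1-\eps)(v-u)$ across $[u,v]$; hence $\sup_{t \in [u,v]} |g(t)-f(t)| \ge \tfrac12 (1-\eps)(v-u)$. To convert this into a lower bound of the required form $\eps\,\diam(I_1)$, I would \emph{choose the flat interval as large as possible}: by the construction of the fat Cantor set, the interval $I_1$ (of length $\ge c$) must contain one of the removed middle-third gaps, and the gaps removed at a controlled stage have length comparable to $\diam(I_1)$ — say $\diam([u,v]) \ge \kappa\,\diam(I_1)$ for an absolute constant $\kappa$ depending only on the cutout schedule. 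Combining, $\sup_{t \in I_1}|g(t)-f(t)| \ge \tfrac12(1-\eps)\kappa\,\diam(I_1)$, and fixing $\eps>0$ small enough that $\tfrac12(1-\eps)\kappa \ge \eps$ completes the argument, uniformly over all admissible $c$.

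The main obstacle I anticipate is the last step: controlling the \emph{size} of a flat gap relative to $\diam(I_1)$. Mere nowhere-density yields \emph{some} flat subinterval but gives no lower bound on its length, so the deviation could a priori be negligible compared to $\diam(I_1)$. The fix must exploit the explicit geometry of the fat Cantor set from Lemma \ref{lemma:fat-Cantor}: any interval of length at least $c$ straddles a removed gap whose length is a fixed fraction of $c$, and this fraction is what supplies the constant $\kappa$. Making this quantitative — identifying at which construction stage $n$ a guaranteed gap appears inside $I_1$ and bounding its length $kc^n/4^{n-1}$ from below by $\kappa\,\diam(I_1)$ — is the technical crux, and the parameters $k$ and $c$ in Lemma \ref{lemma:fat-Cantor} should be tuned precisely so that this comparison survives and $\eps$ can be chosen independently of $c$.
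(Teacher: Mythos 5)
Your strategy (a primitive of a fat-Cantor indicator plus the endpoint estimate $\sup|h|\ge\frac12|h(v)-h(u)|$) is in the right family, but your choice $f(t)=\int_0^t\chi_C(\tau)\,d\tau$ opens a gap that your proposal does not close and that cannot be closed by tuning the parameters of Lemma \ref{lemma:fat-Cantor}. With $f'=\chi_C$, the average slope of $f$ on long intervals is large (more than $1/2$, in fact close to $1$ when the gaps are small), so there is no global contradiction with an affine $g$ of slope $>1-\eps$; you are therefore forced into the local argument of finding, inside every admissible $I_1$, a removed gap of length at least $\kappa\,\diam(I_1)$ with $\kappa$ independent of $c$. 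That property is false for the set of Lemma \ref{lemma:fat-Cantor}: an interval of length exactly $c$ need not meet any early-stage gap; it is only guaranteed to contain a full stage-$n$ surviving interval once $2^{-n}\lesssim c$, i.e. $n\approx\log_2(1/c)$, and the gaps it is then guaranteed to contain have length of order $(c/4)^{n}$, whose ratio to $c$ tends to $0$ (extremely fast) as $c\to0$. Hence your final requirement $\frac12(1-\eps)\kappa\ge\eps$ cannot be met with $\eps$ uniform in $c$, which is exactly what the theorem demands. Moreover, no tuning of $k$ rescues this: if the gaps were proportional to the surviving intervals (fixed ratio $\kappa$) at every stage, the measure removed at stage $j$ would be $\kappa(1-\kappa)^{j-1}$, which sums to $1$, so the limiting set would have measure zero --- proportional gaps at all scales are incompatible with $\lambda(C)>0$. (Your route could be salvaged by a \emph{finite-stage} construction with proportional gaps, stopped at scale comparable to $c$, but that is a genuinely different construction from the one you invoke.)

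The paper avoids all of this by taking the complementary primitive $f(t)=t-\int_0^t\chi_C(\tau)\,d\tau$, so that $f$ is flat \emph{on} $C$ and has slope $1$ on the gaps. Then the density bound of Lemma \ref{lemma:fat-Cantor} --- the only information that is uniform in $c$ --- applies directly: the average slope of $f$ over any $[a,b]\subseteq[0,1]$ with $|b-a|\ge c$ is less than $1/2$, while the admissible affine $g(t)=x_0t+y_0$ has $x_0>7/8$, so comparing increments of $g$ and $f$ over all of $[a,b]$ (your same endpoint estimate, applied globally rather than on a gap) gives $\sup_{t\in[a,b]}|g(t)-f(t)|\ge\frac12\bigl[(x_0-1)(b-a)+\lambda(C\cap[a,b])\bigr]>\frac{3}{16}|b-a|$, with constants uniform in $c$. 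In short: the function must be flat on the set whose density Lemma \ref{lemma:fat-Cantor} controls, not on its complement; then the argument is global and no lower bound on gap sizes is ever needed.
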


\begin{proof}
For any fixed $0<c<1$, we construct $f \in \Lipn([0,1],\R)$ with $\Lipn(f)=1$ so that whenever an interval $[a,b] \subseteq [0,1]$ with $|b-a| \geq c$ and an affine funcion $g:[a,b] \to \R$ with $\Lipn(g) > 7/8$ are given, we have
$$
\sup_{t \in [a,b]} |g(t) - f(t) | > \frac{1}{8}|b-a|.
$$
Let $C$ be a fat Cantor set, such that the length of each removed interval is $c^n/4^n$ during the $n^\text{th}$ step. It is immediate from the calculation in Lemma \ref{lemma:fat-Cantor} that $\lambda(C)>0$. Now, define the Lipschitz map $f \in \Lipn([0,1],\R)$ by
$$
f(t) := t - \int_0^t \chi_C(\tau) \, d\tau \qquad \text{for } t \in [0,1].
$$
Since $\Lipn(f|_{[0,1]})=1$ and $f$ is non-decreasing, an interval $[a,b] \subseteq [0,1]$ with $|b-a|\geq c$ and an affine function $g(t) := x_0t+y_0$ defined on $[a,b]$ must satisfy that $x_0 = \Lipn(g) >7/8$. Thus for every choice of $y_0 \in \R$, we have
\begin{align*}
\sup_{t \in [a,b]} |g(t)-f(t)| &= \sup_{t \in [a,b]} \left| (x_0-1)t + \left( y_0 + \int_0^t \chi_C(\tau) \, d\tau \right) \right| \\
&\geq \frac{1}{2} \left| (x_0-1)(b-a) + \int_a^b \chi_C(\tau) \, d\tau \right| \\
&\geq \frac{1}{2} \lambda(C \cap [a,b]) - \frac{1}{16}|b-a| \\
&> \frac{1}{8} |b-a|
\end{align*}
as claimed, where the first inequality is optimal when $y_0$ has the value
$$
y_0 = - \frac{1}{2} \left[ (x_0-1)(b-a) + \int_a^b \chi_C(\tau) \,d\tau \right],
$$
and the last inequality is from Lemma \ref{lemma:fat-Cantor}.
\end{proof}


\ \


\end{document}